\documentclass[12pt,reqno]{amsart}
\usepackage{amsmath,amsthm,amsfonts,amssymb,times}
\usepackage[mathscr]{eucal}
\usepackage{verbatim}
\usepackage{url}
\setlength{\textheight}{220mm} \setlength{\textwidth}{155mm}
\setlength{\oddsidemargin}{1.25mm}
\setlength{\evensidemargin}{1.25mm} \setlength{\topmargin}{0mm}

\setbox0=\hbox{$+$}
\newdimen\plusheight
\plusheight=\ht0
\def\+{\;\lower\plusheight\hbox{$+$}\;}

\setbox0=\hbox{$-$}
\newdimen\minusheight
\minusheight=\ht0
\def\-{\;\lower\minusheight\hbox{$-$}\;}

\setbox0=\hbox{$\cdots$}
\newdimen\cdotsheight
\cdotsheight=\plusheight
\def\cds{\lower\cdotsheight\hbox{$\cdots$}}

\makeatletter
\def\leqalignno#1{\displ@y \tabskip\z@ plus\@ne fil
  \halign to\displaywidth{\hfil$\@lign\displaystyle{##}$\tabskip\z@skip
    &$\@lign\displaystyle{{}##}$\hfil\tabskip\z@ plus\@ne fil
    &\kern-\displaywidth\rlap{$\@lign\hbox{\rm##}$}\tabskip\displaywidth\crcr
    #1\crcr}}
\makeatother

\newcommand{\eb}{\begin{equation}}
\newcommand{\ee}{\end{equation}}
\renewcommand{\Im}{\operatorname{Im}}
\newcommand{\df}{\dfrac}
\newcommand{\tf}{\tfrac}

\renewcommand{\Re}{\operatorname{Re}}
\renewcommand{\Im}{\operatorname{Im}}

\newcommand{\s}{{\sigma}}

 \renewcommand{\a}{\alpha}
\renewcommand{\b}{\beta}
\newcommand{\e}{\epsilon}

\renewcommand{\l}{\lambda}

\renewcommand{\Re}{\textup{Re}}
\renewcommand{\Im}{\textup{Im}}

\renewcommand{\(}{\left\(}
\renewcommand{\)}{\right\)}
\renewcommand{\[}{\left\[}
\renewcommand{\]}{\right\]}

\numberwithin{equation}{section}
 \theoremstyle{plain}
\newtheorem{theorem}{Theorem}[section]
\newtheorem{lemma}[theorem]{Lemma}
\newtheorem{corollary}[theorem]{Corollary}

\newtheorem{entry}[theorem]{Entry}

\allowdisplaybreaks

\begin{document}
\title[Ramanujan and Koshliakov Meet Abel and Plana]
{Ramanujan and Koshliakov Meet Abel and Plana}
\author{Bruce C.~Berndt, Atul Dixit, Rajat Gupta, Alexandru Zaharescu}
\address{Department of Mathematics, University of Illinois, 1409 West Green
Street, Urbana, IL 61801, USA} \email{berndt@illinois.edu}
\address{Department of Mathematics, Indian Institute of Technology Gandhinagar, Palaj, Gandhinagar 382355, Gujarat, India}\email{adixit@iitgn.ac.in}
\address{Department of Mathematics, Indian Institute of Technology Gandhinagar, Palaj, Gandhinagar 382355, Gujarat, India}\email{rajat\_gupta@iitgn.ac.in}
\address{Department of Mathematics, University of Illinois, 1409 West Green
Street, Urbana, IL 61801, USA; Institute of Mathematics of the Romanian
Academy, P.O.~Box 1-764, Bucharest RO-70700, Romania}
\email{zaharesc@illinois.edu}

\begin{abstract}
The neglected Russian mathematician, N.~S.~Koshliakov, derived beautiful generalizations of the classical Abel--Plana summation formula through a setting arising from a boundary value problem in heat conduction. When we let the parameter $p$ in this setting tend to infinity, his formulas  reduce to the classical Abel--Plana summation formula. Rigorous formulations and proofs of these summation formulas are given. In his notebooks, Ramanujan derived different analogues of the Abel--Plana summation formula.  One particular example provides a vast new generalization of the classical transformation formula for Eisenstein series, which we generalize in Koshliakov's setting.
\end{abstract}
\maketitle

\begin{center}\textbf{In Memory of M.~V.~Subbarao}\end{center}

\section{Introduction} For the past three centuries, general summation formulas have been very useful, especially for those working in number theory, combinatorics, analysis, and applied mathematics.  Among these summation formulas are the Euler--Maclaurin, Poisson, Vorono\"{\i}, and Abel--Plana summation formulas. As the title suggests, the Abel--Plana summation formula is the focus of the present paper. It has many applications. For example, it readily gives Hermite's formula for the Hurwitz zeta function $\zeta(s, a)$ \cite[p.~609, Formula \textbf{25.11.27}]{nist}. It also provides a useful representation for the discrete Laplace transform of a function of semi-exponential type \cite[p.~329--332]{henrici2}.

There exist various extensions and generalizations of the Abel--Plana summation formula; see, for example, \cite{bellucci}, \cite{saaryan1} and \cite{saaryan2}. See also \cite{butzer} for an informative article on the connections among Euler--Maclaurin, Abel--Plana, Poisson summation formulas and the approximate sampling formula.

The work of the Russian mathematician, N.~S.~Koshliakov, is not well-known, perhaps because most of his papers were written in Russian, although he also published in English and German.  In particular, he derived  beautiful generalizations and variants of the Abel--Plana summation formula.  From their appearances in several of his papers, it is clear that he dearly loved the Abel--Plana summation formula and his analogues, all of which have largely been ignored \cite{koshliakov},  \cite{koshliakov7}, \cite[Equation 4]{koshliakovsomesum1}, \cite[Equation 7]{koshliakovsomesum2}, \cite[p.~46, Equation (I)]{koshliakovmellinappl}, \cite{koshliakov3}.  An atypical feature of Koshlikov's summation formulas and his variants is that generally the sums are not over the positive integers or the non-negative integers, but instead over a sequence of positive numbers $\lambda_n$, which are solutions of a certain transcendental equation.  They lie in intervals of the form $(n-\tf12,n)$, and when $\lambda_n\to n, n\in\mathbb{N}$, Koshliakov's summation formula reduces to the Abel--Plana summation formula.  Koshliakov's summation formula has an uncommon origin in that it arises from an eigenvalue problem in physics, which, as given in \cite{koshliakovmatsb}, is now briefly  explained.

Take a sphere of radius $r=R_2$ with temperature $v(t)$. Assume that $v(0)=0. $  Let a heat source be placed on a spherical surface of radius $R_1$,\hspace{1mm} where $0<R_1<R_2$.  Let the rate per unit time of the propagation of heat from the entire surface be $Q(t)$. The problem considered by Koshliakov is concerned with finding the temperature of the sphere $r=R_2$ at time $t>0$. Let $k$ be the thermal conductivity, $h$ the emissivity of the surface, $c$ the specific heat, and $\rho$ the density of the material which forms the sphere. The heat equation relevant to the problem is
\begin{align}\label{he}
\frac{\partial v}{\partial t}&=a^2\frac{\partial^2 v}{\partial r^2}, \qquad a=\sqrt{c/(k\rho)};\nonumber\\
v|_{r=0}&=0,\qquad  \left.\frac{\partial v}{\partial r}\right|_{r=R_2}+\left.\left(H-\frac{1}{R_2}\right)v\right|_{r=R_2}=0, \qquad H=h/k.
\end{align}
Problems such as these are  important in the analytical theory of heat distribution.

Koshliakov's summation formula first appeared in \cite{koshliakov}, where there is no statement of the theorem.  The theorem's hypotheses are woven into the proof, for which details are few.  Therefore, our first task is to resurrect Koshliakov's formula, record it as a theorem with hypotheses, and give a rigorous proof with all necessary details in Theorem \ref{theorem1}. Koshliakov \cite[p.~53, Equation (1)]{koshliakov3} obtained an equivalent form of his formula (see Theorem \ref{theorem4} below) and also gave its proof. However, it involves results associated with a generalized zeta function $\zeta_p(s)$ introduced by Koshliakov whereas the proof in \cite{koshliakov} employs only contour integration and Cauchy's residue theorem. The function $\zeta_p(s)$ is defined by \cite[p.~6]{koshliakov3}
\begin{align}\label{zps}
\zeta_p(s):=\sum_{j=1}^{\infty}\frac{p^2+\lambda^2_{j}}{p\left(p+\frac{1}{\pi}\right)+\lambda^2_j}\cdot \frac{1}{\lambda^s_j},\qquad
\Re(s)>1
\end{align}
where $\lambda_j$ runs over the roots of the transcendental equation
\begin{align}\label{ce2}
p \sin(\pi \lambda)+\lambda \cos(\pi \lambda)=0.
\end{align}
Actually, this is one of the two generalized zeta functions whose theories Koshliakov develops in his manuscript \cite{koshliakov3}. The other generalized zeta function, namely, $\eta_p(s)$, is defined by \cite[p.~6]{koshliakov3}
\begin{equation}\label{eps}
\eta_{p}(s):= \sum_{k=1}^{\infty}\frac{(s,2 \pi p k)_k}{k^s},\hspace{5mm}\textup{Re}(s)>1,
\end{equation}
where
\begin{equation}\label{s}
(s,\nu k)_k:=\frac{1}{\Gamma(s)}\int_{0}^{\infty}e^{-x}\left(\frac{k\nu-x}{k\nu+x}\right)^kx^{s-1}\, dx.
\end{equation}
These two generalized zeta functions are designated as \emph{Koshliakov zeta functions} in \cite{kzf}.

The aforementioned summation formula of Koshliakov is one of several variations and generalizations of the Abel--Plana summation formula.  In Theorem \ref{theorem3}, we use Theorem \ref{theorem1} to prove perhaps a closer analogue of the Abel--Plana summation formula. Koshliakov \cite[Chapter 3, Equation (I)]{koshliakov3} gave another generalization of the Abel--Plana summation formula, whose proof again involves the theory of his generalized zeta functions. It is stated in Theorem \ref{koshalt} below.

Ramanujan also discovered the Abel--Plana summation formula and recorded it twice, once in Chapter 13 of his second notebook  and later on page 335 in the unorganized pages of his second notebook \cite{nb}, \cite[pp.~220--221]{II}, \cite[p.~411]{V}.  On page 335, he also recorded three analogues of the Abel--Plana summation formula.  Another goal of the present paper is to demonstrate that all three of Ramanujan's general theorems can be derived from Koshliakov's versions of the Abel--Plana summation formula.  We do not know Ramanujan's proofs of these three summation formulas.  The proofs given by the first author in \cite[pp.~413--416]{V} employ contour integration, which almost certainly is not how Ramanujan proved these three formulas.

\section{The Abel--Plana Summation Formula}
    The rendition of the Abel--Plana summation formula that we give below is taken from P.~Henrici's text \cite[p.~274]{henrici}.

\begin{theorem}\label{abelplana}
Let $\varphi(z)$ be analytic for $\Re\,z\geq0$, and suppose that either
$$\sum_{n=0}^{\infty}\varphi(n) \qquad \text{or} \qquad \int_0^{\infty}\varphi(x)dx$$
converges.  Assume further that
$$\lim_{y\to\infty}|\varphi(x\pm iy)|e^{-2\pi y}=0,$$
uniformly in $x$ on every finite interval, and that
$$\int_0^{\infty}|\varphi(x\pm iy)|e^{-2\pi y}dy$$
exists for every $x\geq0$ and tends to $0$ as $x\to\infty$.  Then
\begin{equation}\label{abel}
\sum_{n=0}^{\infty}\varphi(n)=\frac12\varphi(0)+\int_0^{\infty}\varphi(x)dx
+i\int_0^{\infty}\df{\varphi(iy)-\varphi(-iy)}{e^{2\pi y}-1}dy.
\end{equation}
\end{theorem}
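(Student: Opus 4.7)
The plan is to derive the identity by contour integration in the right half-plane, using two auxiliary meromorphic functions. Define
$$F(z) := \frac{\varphi(z)}{e^{-2\pi i z} - 1}, \qquad G(z) := \frac{\varphi(z)}{e^{2\pi i z} - 1}.$$
Each has simple poles at the nonnegative integers, with residues $\operatorname*{Res}_{z=n} F = -\varphi(n)/(2\pi i)$ and $\operatorname*{Res}_{z=n} G = \varphi(n)/(2\pi i)$. The crucial features of this choice are: $F(z)$ decays as $\Im z \to +\infty$ while $G(z)$ decays as $\Im z \to -\infty$ (since $|e^{\mp 2\pi i z}| = e^{\pm 2\pi \Im z}$); on the real axis away from the integers one has the algebraic identity $F(x) + G(x) = -\varphi(x)$; and on the imaginary axis $F(iy) - G(-iy) = (\varphi(iy) - \varphi(-iy))/(e^{2\pi y} - 1)$.

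For a fixed positive integer $N$ and large $T > 0$, I would apply Cauchy's theorem to $F$ on the positively oriented boundary of the upper rectangle with vertices $0,\ N + \tfrac12,\ N+\tfrac12 + iT,\ iT$, indented by small semicircles of radius $\epsilon$ into the upper half-plane at each integer $n = 0, 1, \ldots, N$. Since the indentations exclude all the poles, the contour integral vanishes; as $\epsilon \to 0$, the semicircles at $n = 1, \ldots, N$ each contribute $\tfrac12\varphi(n)$ and the quarter-circle indentation at the corner $0$ contributes $\tfrac14\varphi(0)$. Doing the analogous thing for $G$ on the lower rectangle (vertices $0,\ N+\tfrac12,\ N+\tfrac12 - iT,\ -iT$, indented into the lower half-plane) yields a matching identity with the same total contribution at each integer.

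Adding the two resulting identities, the real-axis integrals combine via $F + G = -\varphi$ into $-\int_0^{N+1/2}\varphi(x)\,dx$, and the two imaginary-axis integrals assemble, via the formula for $F(iy) - G(-iy)$, into $i\int_0^T (\varphi(iy) - \varphi(-iy))/(e^{2\pi y}-1)\,dy$. I would then let $T \to \infty$, using the crude bound $|F(x + iT)| \le |\varphi(x+iT)|/(e^{2\pi T}-1)$ (and its $G$-counterpart) together with the uniform decay hypothesis to kill the top and bottom edges, and finally let $N \to \infty$, using the identity $e^{-i\pi(2N+1)} = -1$ to obtain $|F(N+\tfrac12 + iy)| \le |\varphi(N+\tfrac12+iy)|e^{-2\pi y}$ and the integrability hypothesis to kill the right vertical edges. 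Rearranging what survives yields \eqref{abel}. The main obstacle is not the contour integration itself but rather the careful bookkeeping of orientations and indentation contributions, and checking that the precise decay hypotheses suffice to justify taking the limits in $T$ and $N$ in the prescribed order.
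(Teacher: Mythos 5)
Your argument is correct, and it is the classical direct proof of the Abel--Plana formula; the paper, by contrast, does not prove Theorem \ref{abelplana} at all but quotes it from Henrici and later recovers it as the $p\to\infty$ limit of Koshliakov's generalization (Theorem \ref{theorem3}, itself a consequence of Theorem \ref{theorem1}). The two routes differ in a substantive way. The paper's proof of the generalization integrates a \emph{single} kernel $f(z)/\bigl(\sigma(iz)e^{2\pi iz}-1\bigr)$ (suitably weighted) over \emph{one} rectangle straddling the real axis, so the poles at the $\lambda_n$ lie in the interior and contribute full residues; the upper half of the contour is then handled by the algebraic decomposition $F=-\mu-\mu/\bigl(\sigma(-iz)e^{-2\pi iz}-1\bigr)$, which plays exactly the role of your identity $F+G=-\varphi$. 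You instead keep the poles \emph{on} the contour, use two rectangles so that each of $\varphi(z)/(e^{\mp 2\pi iz}-1)$ decays in its own half-plane, and collect half-residues from the indentations. Your approach is more elementary and self-contained for the classical case; the paper's single-contour approach is what makes the $p$-generalization tractable, since the $\lambda_n$ are not symmetrically placed and the half-residue bookkeeping would be less clean there. One point you should make explicit: the lower rectangle must be traversed so that its real-axis edge runs in the \emph{same} direction as that of the upper rectangle (i.e., the lower rectangle is negatively oriented). With both rectangles positively oriented, the indentation contributions at each integer cancel rather than add, and the real-axis integrands combine as $F-G$ instead of $F+G=-\varphi$. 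Your phrase ``matching identity with the same total contribution at each integer'' indicates you intend the correct choice, but the write-up should say so. A second, minor point: the individual edge integrals near the corner $z=0$ diverge logarithmically as $\epsilon\to0$ (the integrand behaves like $\varphi(0)/(2\pi i z)$ there), and only the combination of the two edges meeting at the corner, or equivalently the combined integrand $\bigl(\varphi(iy)-\varphi(-iy)\bigr)/(e^{2\pi y}-1)=O(1)$, has a finite limit; so the $\epsilon\to0$ limit must be taken for each whole contour identity, not edge by edge. With these clarifications the proof is complete and the stated hypotheses suffice for the limits in the order $\epsilon\to0$, $T\to\infty$, $N\to\infty$.
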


\section{Koshliakov's Generalization of the Abel--Plana Summation Formula}

In order to state Koshliakov's generalization of the Abel--Plana summation formula, it is first necessary to make some definitions and prove two lemmas.

Define
\begin{equation}\label{sig}
\sigma(t):=\df{p+t}{p-t},
\end{equation}
where $p>0$ is real. Note that
\begin{equation}\label{sigma}
\sigma(-t)=1/\sigma(t).
\end{equation}
It is to be noted that the parameter $p=R_2H-1$ in the setting of \eqref{he} as mentioned by Koshliakov in \cite{koshliakovmatsb}.

\begin{lemma} The zeros of $\sigma(iz)=e^{-2\pi iz}$ are real, and if $z=x+iy$, they are roots of the equation
\begin{equation}\label{diophantine}
x\cos(\pi x)+p\sin(\pi x)=0.
\end{equation}
\end{lemma}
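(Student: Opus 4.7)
The plan is to prove this in two stages: first show that any solution of $\sigma(iz)=e^{-2\pi iz}$ must be real by comparing moduli, and then verify that the real solutions satisfy \eqref{diophantine}.

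For the reality claim, I would write $z=x+iy$ and compute
\begin{equation*}
|\sigma(iz)|^2 \;=\; \left|\frac{p+iz}{p-iz}\right|^2 \;=\; \frac{(p-y)^2+x^2}{(p+y)^2+x^2},
\qquad |e^{-2\pi iz}|^2 \;=\; e^{4\pi y}.
\end{equation*}
Since $(p+y)^2-(p-y)^2=4py$ and $p>0$ by hypothesis, the first ratio is strictly less than $1$ when $y>0$ and strictly greater than $1$ when $y<0$; the exponential behaves in the opposite way ($>1$ when $y>0$, $<1$ when $y<0$). Hence equality forces $y=0$. (I would also note in passing that $z=-ip$, the pole of $\sigma(iz)$, is not a solution since $e^{-2\pi i(-ip)}=e^{-2\pi p}$ is finite.)

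For the second stage, set $y=0$ and rewrite $\sigma(iz)=(p+ix)/(p-ix)$. Since $p>0$, we have $p\pm ix=\sqrt{p^2+x^2}\,e^{\pm i\arctan(x/p)}$, so
\begin{equation*}
\sigma(ix)=e^{2i\arctan(x/p)}.
\end{equation*}
Thus $\sigma(ix)=e^{-2\pi ix}$ is equivalent to $e^{2i(\arctan(x/p)+\pi x)}=1$, i.e.\ $\sin\bigl(\pi x+\arctan(x/p)\bigr)=0$. Expanding the sine and using $\cos(\arctan(x/p))=p/\sqrt{p^2+x^2}$, $\sin(\arctan(x/p))=x/\sqrt{p^2+x^2}$, the condition reduces to
\begin{equation*}
\frac{p\sin(\pi x)+x\cos(\pi x)}{\sqrt{p^2+x^2}}=0,
\end{equation*}
which is \eqref{diophantine}.

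There is no real obstacle here; the only subtlety worth flagging is the need to invoke $p>0$ in both directions (to make the modulus argument strict and to make the polar form of $p\pm ix$ unambiguous). If one preferred, the second stage could equally well be done by clearing the denominator in $p+ix=(p-ix)e^{-2\pi ix}$ and separating real and imaginary parts, which yields the same equation after a short manipulation.
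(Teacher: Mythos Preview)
Your proof is correct. The reality argument via moduli is exactly the paper's argument (they phrase it as $|\sigma(iz)|<1$ and $|e^{2\pi iz}|<1$ in $\mathbb{H}$, with both inequalities reversed in $\mathbb{H}^-$, so the product cannot equal $1$). For the second stage the paper instead clears the denominator in $(p+ix)e^{2\pi ix}=p-ix$, separates real and imaginary parts, and reduces via the double-angle formulas; your polar-form route through $\sigma(ix)=e^{2i\arctan(x/p)}$ and the sine addition formula is a tidy alternative that avoids the intermediate $2\pi x$ identities, and you even anticipate the paper's method in your closing remark.
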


Throughout the sequel, $\mathbb{H}=\{z: \Im \, z>0\}$ and
$\mathbb{H^-}=\{z: \Im \, z<0\}$.

\begin{proof}Suppose that
\begin{equation}\label{roots}
\df{p+iz}{p-iz}e^{2\pi iz}=1.
\end{equation}
If $z\in\mathbb{H}$,
$$ \left|\df{p+iz}{p-iz}\right|  <1,\qquad    \left|e^{2\pi iz}\right|<1.$$
Thus, there are no roots of \eqref{roots} in $\mathbb{H}$.  If $z\in\mathbb{H}^{-}$, then
$$ \left|\df{p+iz}{p-iz}\right|  >1,\qquad    \left|e^{2\pi iz}\right|>1.$$
  In conclusion, all of the roots of \eqref{roots} are real.

If we replace $z$ by real $x\neq0$ in \eqref{roots} and equate real and imaginary parts of $(p+ix)e^{2\pi ix}=p-ix$, we find, respectively, that
\begin{align}
p\cos(2\pi x)-x\sin(2\pi x)-p&=0,\label{first}\\
x\cos(2\pi x)+p\sin(2\pi x)+x&=0.\label{second}
\end{align}
Rewrite \eqref{first} and \eqref{second} in their respective forms,
\begin{align}
-p\sin^2(\pi x)-x\sin(\pi x)\cos(\pi x)&=0,\label{firsta}\\
x\cos^2(\pi x)+p\sin(\pi x)\cos(\pi x)&=0.\label{seconda}
\end{align}
If $\cos(\pi x)=0$, then from \eqref{firsta}, $p=0$, which contradicts the fact that $p\neq0$.  Thus, $\cos(\pi x)\neq0$. If $\sin(\pi x)=0$, then from \eqref{seconda}, $\cos(\pi x)=0$.  However, we have already seen that $\cos(\pi x)\neq0$, and so we have another contradiction.  Hence, from \eqref{seconda},
\begin{equation}\label{diophantine5}
x\cos(\pi x)+p\sin(\pi x)=0.
\end{equation}
From \eqref{firsta} also, we can deduce \eqref{diophantine5}.
\end{proof}

\begin{lemma}\label{rooty} The non-negative values of $z$ such that
\begin{equation*}
\df{p+iz}{p-iz}e^{2\pi iz}=1
\end{equation*}
are $0,\lambda_1, \lambda_2, \dots \lambda_k,\dots$, where  $k-\tf12<\lambda_k<k$,
and satisfy the equation
\begin{equation}\label{rootsa}
\tan(\pi x)=-\df{x}{p}.
\end{equation}
\end{lemma}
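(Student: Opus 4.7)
The plan is to use the previous lemma, which already established that all roots of $\frac{p+iz}{p-iz}e^{2\pi iz} = 1$ are real, nonzero ones satisfy \eqref{diophantine5}, and $\cos(\pi x) \neq 0$ at any root. First I would check that $z = 0$ is manifestly a root (both factors evaluate to $1$). For the nonzero roots, since $\cos(\pi x) \neq 0$ at each such root, I can divide \eqref{diophantine5} by $\cos(\pi x)$ to obtain
\[
\tan(\pi x) = -\frac{x}{p},
\]
which establishes the claim \eqref{rootsa} for every nonzero real root.

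Next I would locate the positive roots by analyzing the auxiliary function
\[
f(x) := \tan(\pi x) + \frac{x}{p}
\]
on each open interval $I_k := (k - \tfrac12,\, k + \tfrac12)$ for $k \geq 1$, where $\tan(\pi x)$ is smooth. A direct computation gives $f'(x) = \pi \sec^2(\pi x) + 1/p > 0$ on $I_k$, so $f$ is strictly increasing there. As $x \to (k - \tfrac12)^+$, $f(x) \to -\infty$, while $f(k) = k/p > 0$, so by the intermediate value theorem there is exactly one root $\lambda_k \in (k - \tfrac12,\, k)$. On the right half $[k, k + \tfrac12)$, we have $\tan(\pi x) \geq 0 > -x/p$, so no further root lies there; similarly, on $(0, \tfrac12)$ one has $\tan(\pi x) > 0 > -x/p$, ruling out roots in that initial interval. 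Combining these, the positive roots are precisely $\lambda_1 < \lambda_2 < \cdots$ with $k - \tfrac12 < \lambda_k < k$.

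There is no real obstacle: the argument is essentially elementary calculus once the previous lemma is in hand. The only point requiring a little care is verifying the non-existence of roots in the intervals where $\tan(\pi x) \geq 0$, and checking the sign behavior at the endpoints $k - \tfrac12$ (where $\tan(\pi x) \to -\infty$) versus $k$ (where $\tan(\pi x) = 0$ and $-x/p < 0$). The strict monotonicity of $f$ on $I_k$ is what upgrades existence of $\lambda_k$ to uniqueness in that interval, and thereby produces the clean interlacing $k - \tfrac12 < \lambda_k < k$ claimed by the lemma.
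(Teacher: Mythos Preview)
Your proposal is correct and follows essentially the same approach as the paper: derive \eqref{rootsa} from the previous lemma by dividing through by $\cos(\pi x)$, then locate the roots by sign analysis of $\tan(\pi x)$ versus $-x/p$ on each interval $(k-\tfrac12,k)$. Your version is in fact more complete, since the paper's proof only sketches existence of a root in $(k-\tfrac12,k)$ without the monotonicity argument for uniqueness or the explicit exclusion of roots in $(0,\tfrac12)$ and $[k,k+\tfrac12)$.
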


\begin{proof}  Since $x\neq0$ and $\cos(\pi x)\neq0$, it is readily seen that \eqref{diophantine} and \eqref{rootsa} are equivalent.
 Now $\tan(\pi x)$ has a positive zero at $x=k$, where $k$ is a positive integer. We see that immediately to the left of $k$ there will be a positive root of \eqref{rootsa}, because $\tan(\pi z)$ becomes negative. This root will be greater than $k-\tf12$, because $\tan(\pi z)$ changes sign at $z=k-\tf12$. If we denote by $\lambda_k$ that root in $(k-\tf12,k)$, we complete the proof.
 \end{proof}

\begin{theorem}\label{theorem1} Let $f(z)$ denote an analytic function for $\Re(z)\geq 0$.  Set $z=x+iy$.  Suppose that
\begin{equation}\label{hypoth}
\int_0^{\infty}|f(x+iy)|dx <\infty,
\end{equation}
and moreover that this integral is bounded as $|y|\to\infty$.
 Assume that
 \begin{equation}\label{infinity}
\lim_{y\to\infty}|f(x\pm iy)|e^{-2\pi y}=0,
\end{equation}
uniformly in $x$ on every finite interval.
Also assume that
\begin{equation}\label{rlimit}
\lim_{r\to\infty}\int_{-\infty}^{\infty}e^{-2\pi |y|}|f(r+iy)|dy=0.
\end{equation}
  Recall that the sequence $\lambda_n$, $n\geq1$, is defined in Lemma \ref{rooty}. Assume that
$$\sum_{n=1}^{\infty}f(\lambda_n)$$ converges.
Also, recall that $\sigma(t)$ is defined in \eqref{sig}.
 Then
\begin{align}\label{main}
\sum_{n=1}^{\infty}f(\lambda_n)=&-\df12 f(0)+\int_0^{\infty}\df{p\left(p+\df{1}{\pi}\right)+x^2}{p^2+x^2}f(x)dx\notag\\
& -\df{1}{2\pi}\int_0^{\infty}\{f^{\prime}(it)+f^{\prime}(-it)\}
\log\left(\df{1}{1-\sigma(-t)e^{-2\pi t}}\right)dt.
\end{align}
\end{theorem}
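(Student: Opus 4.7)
The plan is to apply Cauchy's theorem to the kernel $K(z) := g'(z)/g(z)$, where $g(z) = \sigma(iz) e^{2\pi iz} - 1$, integrated around a large rectangle in the right half-plane. By Lemma \ref{rooty} together with the symmetry $\overline{g(\bar z)} = g(z)$, the zeros of $g$ on $\mathbb{R}$ are $0, \pm\lambda_1, \pm\lambda_2, \ldots$, all simple, so $K$ has simple poles of residue $1$ at each of them. Since $\sigma(iz) = (p+iz)/(p-iz)$ has a simple pole at $z = -ip$, the function $K$ also has a simple pole there with residue $-1$; tracking this extra pole on the boundary $\Re z = 0$ will be the central complication of the proof.

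Take the contour to be the rectangle with vertices $\pm iR$, $r \pm iR$, traversed counter-clockwise and indented into $\Re z > 0$ around both $z = 0$ and $z = -ip$. In the upper half-plane the factorization $g = -(1 - \sigma(iz)e^{2\pi iz})$ gives $K(iy) = -i\frac{d}{dy}\log(1-\sigma(-y)e^{-2\pi y})$ for $y > 0$, while in the lower half-plane the factorization $g = \sigma(iz)e^{2\pi iz}(1 - \sigma(-iz)e^{-2\pi iz})$ gives $K(-it) = 2\pi i + \frac{2ip}{p^2 - t^2} + i\frac{d}{dt}\log(1-\sigma(-t)e^{-2\pi t})$ for $t > 0$. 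These formulas allow one to show, using \eqref{hypoth}, \eqref{infinity}, and \eqref{rlimit}, that the right, top, and bottom edges of the rectangle contribute $o(1)$ as $r, R \to \infty$. The two indents produce $-\pi i f(0) + \pi i f(-ip)$, so Cauchy's theorem yields
\begin{equation*}
\sum_{n=1}^\infty f(\lambda_n) = -\tfrac12 f(0) + \tfrac12 f(-ip) - \tfrac{1}{2\pi}\,\mathrm{PV}\!\int_{-\infty}^\infty K(iy) f(iy)\,dy.
\end{equation*}

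To evaluate the principal-value integral, I split the axis at $y = 0$ and apply the two factorizations above. Integration by parts on the logarithmic-derivative pieces symmetrizes $f(it), f(-it)$ and produces the log integral appearing in \eqref{main}, with boundary contribution $i[f(i\epsilon) - f(-i\epsilon)]\log(1-\sigma(-\epsilon)e^{-2\pi\epsilon}) = O(\epsilon \log \epsilon) \to 0$ as $\epsilon \to 0$. The remaining rational piece $\mathrm{PV}\!\int_0^\infty\bigl[2\pi i + \frac{2ip}{p^2 - t^2}\bigr] f(-it)\,dt$ is converted to real-axis integrals by applying Cauchy's theorem to $f(z)$ and to $f(z)/(p^2 + z^2)$ on a large quarter-circle in the fourth quadrant, with the latter indented around the boundary pole at $z = -ip$. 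This produces $-i\int_0^\infty f(-it)\,dt = \int_0^\infty f(x)\,dx$ and the analogous identity with kernel $1/(p^2 + x^2)$; the quarter-circle indent around $-ip$ contributes a half-residue whose net effect, after combining with the appropriate prefactors, is $-\tfrac12 f(-ip)$. The $+\tfrac12 f(-ip)$ from the rectangular indent cancels this $-\tfrac12 f(-ip)$ exactly, and the coefficient of $f(x)$ becomes $1 + \frac{p/\pi}{p^2 + x^2} = \frac{p(p + 1/\pi) + x^2}{p^2 + x^2}$, reproducing the weight in \eqref{main}.

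The hardest part is the consistent bookkeeping of the $f(-ip)$ contributions, which arise at three distinct stages — the rectangular indent, the principal value at $t = p$ inside the PV integral, and the quarter-circle indent in the fourth-quadrant deformation — and cancel only after all three pieces are correctly combined. A secondary technical point is the verification that the boundary terms at $t \to \infty$ in the integrations by parts vanish under the stated hypotheses; the key estimate is $\log(1 - \sigma(-t)e^{-2\pi t}) = O(e^{-2\pi t})$, which absorbs the $o(e^{2\pi t})$ growth of $f(\pm it)$ permitted by \eqref{infinity}.
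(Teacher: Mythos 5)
Your overall architecture is coherent and in fact closely related to the paper's: in the paper's notation one has $\tfrac{1}{2\pi i}\tfrac{g'(z)}{g(z)}f(z)=F(z)+\mu(z)$, your residue bookkeeping at $0$, $-ip$ and the $\lambda_n$ is correct, and the algebra by which the three $f(-ip)$ contributions cancel and the weight $1+\tfrac{p/\pi}{p^2+x^2}=\tfrac{p(p+1/\pi)+x^2}{p^2+x^2}$ emerges does check out. The genuine gap is in the contour estimates, and it occurs twice for the same underlying reason. First, your own formula for $K(-it)$ shows that $K(z)\to 2\pi i$ (not $0$) as $\Im z\to-\infty$, so the bottom edge of your rectangle contributes essentially $2\pi i\int_0^r f(x-iR)\,dx$, and the lower portion of the right edge contributes essentially $2\pi\int_{-R}^0 f(r+iy)\,dy$. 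Hypothesis \eqref{hypoth} only makes $\int_0^\infty|f(x-iR)|\,dx$ \emph{bounded} in $R$, not $o(1)$, and \eqref{rlimit} carries an $e^{-2\pi|y|}$ damping that is absent here; so these edges cannot be discarded. Second, the concluding rotation $-i\int_0^\infty f(-it)\,dt=\int_0^\infty f(x)\,dx$ is equivalent (differentiate $\int_0^\infty f(x+iy)\,dx$ in $y$) to $\int_0^\infty f(x-iT)\,dx\to0$ as $T\to\infty$, again strictly stronger than the assumed boundedness. Thus neither your intermediate principal-value formula nor its conversion to \eqref{main} is justified under the stated hypotheses.

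The repair is precisely the device the paper uses: in the half-plane where the integrand does not decay, subtract its limiting value --- here $\mu(z)=\bigl(1+\tfrac{p/\pi}{p^2+z^2}\bigr)f(z)$ in the lower half-plane --- and evaluate the subtracted piece by an \emph{exact} application of Cauchy's theorem over the lower sub-rectangle, whose boundary already contains the real segment $[0,r]$. This produces the weighted integral $\int_0^\infty\tfrac{p(p+1/\pi)+x^2}{p^2+x^2}f(x)\,dx$ directly, never requires rotating an unweighted integral of $f$ onto the imaginary axis, and leaves only edge integrals carrying an $e^{-2\pi|y|}$ or $e^{-2\pi N}$ factor, which is exactly what \eqref{hypoth}--\eqref{rlimit} are designed to control. (A minor side point: the symmetry is $\overline{g(\bar z)}=g(-z)$, not $g(z)$; this still yields that the real zeros are $0,\pm\lambda_1,\pm\lambda_2,\dots$, so your conclusion there stands.)
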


This theorem is especially interesting, because the summands $f(\lambda_n)$ depend in a very unusual way on the parameter $p$.  Apart from Koshliakov's $p$-analogue of the Poisson summation formula \cite[p.~58, Equation (V)]{koshliakov3}, we know of no other theorem of this sort in the literature.

\section{Proof of Theorem \ref{theorem1}}
\begin{proof}
Now, recalling \eqref{sig}, we have
$$\df{d}{dz}\sigma(iz)=\df{i}{p-iz}+i\df{p+iz}{(p-iz)^2}=\df{2pi}{(p-iz)^2}.$$
Thus, if $G(z):=\sigma(iz)e^{2\pi iz}-1$,
\begin{align}
G^{\prime}(z)=&\df{2pi}{(p-iz)^2}e^{2\pi iz}+2\pi i\df{p+iz}{p-iz}e^{2\pi iz}\notag\\
=&\df{2pi+2\pi i(p^2+z^2)}{(p-iz)^2}e^{2\pi iz}.\label{G}
\end{align}

We now integrate
\begin{equation}\label{F}
F(z):=\df{p\left(p+\df{1}{\pi}\right)+z^2}{p^2+z^2}\df{f(z)}{\sigma(iz)e^{2\pi iz}-1}
\end{equation}
over an indented rectangle  denoted by $\mathcal{C}_r$ and defined as follows.  The horizontal sides are given by $z=x\pm iN$, where $0\leq x\leq r$,  $N$ is a positive number, and $r$ is a positive integer. The vertical sides pass through the origin and the real point $z=r$. There is a semi-circular indentation around  $z=ip$, denoted by $C_{p}$, lying in the right half-plane, and with radius $\epsilon<\tf12$.
Note that at $z=-ip$, $1/\{\sigma(iz)e^{2\pi iz}-1\}=0$.  Thus, $F(z)$ is analytic at $z=-ip$.
The contour $\mathcal{C}_r$ also contains a semi-circular portion around $0$, which lies in the right half-plane, which has  radius $\epsilon<\tf12$, and which is denoted by $C_0=:C_{01}\cup C_{02}$, where $C_{01}$ and $C_{02}$ are those parts of $C_0$ in $\mathbb{H}$ and $\mathbb{H}^-$, respectively.  Furthermore, define $\Gamma_r$ by
\begin{align}\label{contours}
\mathcal{C}_r:=&\Gamma_r+C_{p}+C_0\notag\\
:=&\Gamma_{r1}+\Gamma_{r2}+C_{p}+C_0,
\end{align}
where $\Gamma_{r1}$ and $\Gamma_{r2}$ are those portions of $\Gamma_r$ in $\mathbb{H}$ and $\mathbb{H}^-$, respectively.
Furthermore, let
$$\Gamma_{r11}=\Gamma_{r1}\cup [\epsilon,r]\qquad\text{and} \qquad \Gamma_{r22}=\Gamma_{r2}\cup [r,\epsilon].$$
By Lemma \ref{rooty}, the function $F(z)$ has simple poles on the interior of $\mathcal{C}_r$ at $\lambda_1, \lambda_2,\dots,\lambda_r$. (Note that $\lambda_k<k$ for $1\leq k\leq r$.)

Let $R_a$ denote the residue of a typical pole of $F(z)$ at $z=a$.    Recall from Lemma \ref{rooty} that a pole $a$ of $F(z)$ on the interior of $\mathcal{C}_r$ occurs  when  $e^{2\pi ia}=1/\sigma(ia)$, where $\sigma(t)$ is defined in \eqref{sig}.  Hence, employing  \eqref{G} and \eqref{F} and using a basic formula for the residue of a function with a simple pole, we find that
\begin{align}\label{residue}
R_a=& \df{p\left(p+\df{1}{\pi}\right)+a^2}{p^2+a^2}f(a)\df{(p-ia)^2}{2pi+2\pi i(p^2+a^2)}\df{p+ia}{p-ia}\notag\\
=&\df{p\left(p+\df{1}{\pi}\right)+a^2}{2pi+2\pi i(p^2+a^2)}f(a)\notag\\
=&\df{1}{2\pi i}\df{p\left(p+\df{1}{\pi}\right)+a^2}{\df{p}{\pi}+(p^2+a^2)}f(a)\notag\\
=&\df{f(a)}{2\pi i}.
\end{align}
Thus, by the residue theorem,
\begin{equation}\label{residuetheorem}
\int_{\mathcal{C}_r} F(z)dz=\int_{\Gamma_r}F(z)dz+\int_{C_p}F(z)+\int_{C_{0}}F(z)dz=
\sum_{n=1}^r f(\lambda_n).
\end{equation}

Let $\mu(z)$ be defined by
\begin{equation}\label{mu2}
\mu(z):=\df{p\left(p+\df{1}{\pi}\right)+z^2}{p^2+z^2}f(z).
\end{equation}
Observe that
\begin{equation}\label{mu1}
F(z)=-\mu(z)-\df{\mu(z)}{\sigma(-iz)e^{-2\pi iz}-1}.
\end{equation}
Also observe that $\mu(z)$ is analytic for $\Re(z)\geq0$, except for $z=\pm ip$.
By Cauchy's Theorem,
\begin{equation*}
\int_{\Gamma_{r1}}\mu(z)dz
+\int_{\epsilon}^r\mu(x)dx+\int_{C_{p}}\mu(z)dz+\int_{C_{01}}\mu(z)dz=0,
\end{equation*}
or
\begin{equation}\label{2}
\int_{\Gamma_{r1}}\mu(z)dz=-\int_{\epsilon}^r\mu(x)dx-\int_{C_{p}}\mu(z)dz-\int_{C_{01}}\mu(z)dz.
\end{equation}
Thus, by \eqref{residuetheorem} and \eqref{2},
\begin{align}\label{3}
\int_{\mathcal{C}_r} F(z)dz=&\int_{\Gamma_r}F(z)dz+\int_{C_p}F(z)dz+\int_{C_{0}}F(z)dz\notag\\
=&\int_{\Gamma_{r1}}F(z)dz+\int_{\Gamma_{r2}}F(z)dz+\int_{C_p}F(z)dz+\int_{C_{0}}F(z)dz\notag\\
=&-\int_{\Gamma_{r1}}\mu(z)dz-\int_{\Gamma_{r1}}\df{\mu(z)dz}{\sigma(-iz)e^{-2\pi iz}-1}\notag\\
&+\int_{\Gamma_{r2}}F(z)dz+\int_{C_p}F(z)dz+\int_{C_{0}}F(z)dz\notag\\
=&\int_{\epsilon}^r\mu(x)dx+\int_{C_{01}}\mu(z)dz+\int_{C_{p}}\mu(z)dz
-\int_{\Gamma_{r1}}\df{\mu(z)dz}{\sigma(-iz)e^{-2\pi iz}-1}\notag\\
&+\int_{\Gamma_{r2}}F(z)dz+\int_{C_p}F(z)dz+\int_{C_{0}}F(z)dz\notag\\
=&\int_{C_0}F(z)dz+\int_{\epsilon}^r\mu(x)dx+\int_{C_{01}}\mu(z)dz+\int_{C_{p}}\left\{F(z)+\mu(z)\right\}dz\notag\\
&-\int_{\Gamma_{r1}}\df{\mu(z)dz}{\sigma(-iz)e^{-2\pi iz}-1}+
\int_{\Gamma_{r2}}\df{\mu(z)dz}{\sigma(iz)e^{2\pi iz}-1},
\end{align}
by \eqref{F}.

 We examine each of the integrals in \eqref{3}.

First, recall from \eqref{residue} that $R_0=f(0)/(2\pi i)$.  Thus,
\begin{gather}\label{5}
\lim_{\epsilon\to0}\int_{C_0}F(z)dz=\lim_{\epsilon\to0}\int_{C_0}\left(\df{f(0)}{2\pi i}\df{1}{z}+\cdots\right)
=\lim_{\epsilon\to0}\df{f(0)}{2\pi i}\int_{\pi/2}^{-\pi/2}\df{i\epsilon e^{i\theta}}{\epsilon e^{i\theta}}d\theta\notag\\
=\df{f(0)}{2\pi}\int_{\pi/2}^{-\pi/2}d\theta=-\df12 f(0).
\end{gather}

Second, because $\mu(z)$ is analytic at $z=0$,
\begin{equation}\label{6}
\lim_{\epsilon\to0}\int_{C_{01}}\mu(z)dz=0.
\end{equation}

Third, we note that at $z=ip$, $1/\{\sigma(-iz)e^{-2\pi iz}-1\}=0$.  Thus, $F(z)+\mu(z)$ is analytic at $z=ip$, and so
\begin{equation}\label{7}
\lim_{\epsilon\to 0}\int_{C_{p}}\left\{F(z)+\mu(z)\right\}dz=0.
\end{equation}

Fourth, we examine the contributions of each of the latter two integrals in \eqref{3} on the imaginary axis.  In the first integral below, we set $z=iy$, and in the second integral we set $z=-iy$.  Thus, we find that
\begin{align}\label{4}
&-\int_{[iN,0]}\df{\mu(z)dz}{\sigma(-iz)e^{-2\pi iz}-1}+
\int_{[0,-iN]}\df{\mu(z)dz}{\sigma(iz)e^{2\pi iz}-1}\notag\\
=&i\int_0^N\df{\mu(iy)dy}{\sigma(y)e^{2\pi y}-1}-i\int_0^N\df{\mu(-iy)dy}{\sigma(y)e^{2\pi y}-1}\notag\\
=&i\int_0^N\df{\left\{\mu(iy)-\mu(-iy)\right\}dy}{\sigma(y)e^{2\pi y}-1}.
\end{align}
Next, we examine these integrals on the top side of $\mathcal{C}_r$.
Set $z=x+iN, 0\leq x \leq r$. By our hypotheses \eqref{hypoth} and \eqref{infinity}, there exist positive constants $C_1$ and $C_2$ such that
\begin{align}\label{14}
\left|\int_{[iN,iN+r]}\df{\mu(z)dz}{\sigma(-iz)e^{-2\pi iz}-1}\right|
&\leq C_1\int_{0}^r|\mu(z)|e^{-2\pi N}dx\leq C_2e^{-2\pi N}\int_0^{r}|f(x+iN)|dx\notag\\
&\leq C_2e^{-2\pi N}\int_0^{\infty}|f(x+iN)|dx=o(1),
\end{align}
as $N\to\infty$.
 By a similar argument,
\begin{equation}\label{15}
\left|\int_{[-iN,-iN+r]}\df{\mu(z)dz}{\sigma(iz)e^{2\pi iz}-1}\right|=o(1),
\end{equation}
as $N\to\infty$.
Lastly, after we have let $N\to\infty$, there remain the examinations of the latter two integrals  on the right side of \eqref{3}.
By \eqref{rlimit}, there exist constants $C_3$ and $C_4$ such that
\begin{equation}\label{16}
\left|\int_{0}^{\infty}\df{\mu(r+iy)dy}{\sigma(-ir+y)e^{2\pi (-ir+y)}-1}\right|
\leq C_3\int_{0}^{\infty}e^{-2\pi |y|}|f(r+iy)|dy=o(1),
\end{equation}
as $r\to\infty$.
Similarly,
\begin{equation}\label{16a}
\left|\int_{-\infty}^{0}\df{\mu(r+iy)dy}{\sigma(ir-y)e^{2\pi (ir-y)}-1}\right|
\leq C_4\int_{-\infty}^{0}e^{-2\pi |y|}|f(r+iy)|dy=o(1),
\end{equation}
as $r\to\infty$.

Hence,  letting $\epsilon\to0$, $r\to\infty$, $N\to\infty$, and using \eqref{4}--\eqref{16a},  we find that \eqref{3} yields
\begin{align}\label{9}
\lim_{r\to\infty}\int_{\mathcal{C}_r} F(z)dz=-\df12 f(0)+\int_0^{\infty}\mu(x)dx+
i\int_0^{\infty}\df{\left\{\mu(iy)-\mu(-iy)\right\}dy}{\sigma(y)e^{2\pi y}-1}.
\end{align}

We next give an alternative representation for the integral on the far right side of \eqref{9}.  To that end,
\begin{align}\label{10}
\df{d}{dt}\log\{1-\sigma(-t)e^{-2\pi t}\}
=&\df{1}{1-\sigma(-t)e^{-2\pi t}}\left(\sigma^{\prime}(-t)e^{-2\pi t}+2\pi \sigma(-t)e^{-2\pi t}\right)\notag\\
=&\df{e^{-2\pi t}}{1-\sigma(-t)e^{-2\pi t}}\left(\df{2p}{(p+t)^2}+2\pi\df{p-t}{p+t}\right)\notag\\
=&\df{1}{e^{2\pi t}-\sigma(-t)}\df{2p+2\pi(p^2-t^2)}{(p+t)^2}\notag\\
=&\df{1}{e^{2\pi t}-(p-t)/(p+t)}\df{2p+2\pi(p^2-t^2)}{(p+t)^2}\notag\\
=&\df{(p+t)/(p-t)}{\sigma(t)e^{2\pi t}-1}\,\df{2p+2\pi(p^2-t^2)}{(p+t)^2}\notag\\
=&\df{1}{\sigma(t)e^{2\pi t}-1}\left(\df{2p}{p^2-t^2}+2\pi\right),
\end{align}
or, upon rewriting \eqref{10}, we deduce that
\begin{align}\label{11}
\df{1}{2\pi}\df{d}{dt}\log\{1-\sigma(-t)e^{-2\pi t}\}
=&\df{1}{\sigma(t)e^{2\pi t}-1}\left(\df{p}{\pi(p^2-t^2)}+1\right)\notag\\
=&\df{1}{\sigma(t)e^{2\pi t}-1}\df{p\left(p+\df{1}{\pi}\right)-t^2}{p^2-t^2}.
\end{align}
Hence, using \eqref{11} in \eqref{9} and  integrating by parts, we conclude that
\begin{align}\label{12}
&i\int_0^\infty\df{\left\{\mu(it)-\mu(-it)\right\}}{\sigma(t)e^{2\pi t}-1}dt\notag\\
=&i\int_0^{\infty}\left(\df{p\left(p+\df{1}{\pi}\right)-t^2}{p^2-t^2}f(it)-
\df{p\left(p+\df{1}{\pi}\right)-t^2}{p^2-t^2}f(-it)\right)\df{dt}{\sigma(t)e^{2\pi t}-1}\notag\\
=&i\int_0^{\infty}\left\{f(it)-f(-it)\right\}\df{1}{2\pi}\df{d}{dt}\log\{1-\sigma(-t)e^{-2\pi t}\}dt\notag\\
=&\df{i}{2\pi}\biggl(\left\{f(it)-f(-it)\right\}   \log(1-\sigma(-t)e^{-2\pi t})\biggr|_0^\infty    \notag\\
&-i\int_0^{\infty}\left\{f^{\prime}(it)+f^{\prime}(-it)\right\}    \log(1-\sigma(-t)e^{-2\pi t})dt\biggr)\notag\\
=&\df{1}{2\pi}\int_0^{\infty}\left\{f^{\prime}(it)+f^{\prime}(-it)
\right\}\log(1-\sigma(-t)e^{-2\pi t})dt,
\end{align}
where we appealed to \eqref{infinity}.
Employing \eqref{12} in \eqref{9}, we deduce that
\begin{align}\label{13}
\lim_{r\to\infty}\int_{\mathcal{C}_r} F(z)dz=&-\df12 f(0)+\int_0^{\infty}\mu(x)dx\notag\\&+
\df{1}{2\pi}\int_0^{\infty}\left\{f^{\prime}(it)+f^{\prime}(-it)\right\}\log(1-\sigma(-t)e^{-2\pi t})dt.
\end{align}

  Combining \eqref{residuetheorem} with \eqref{13} and recalling the definition of $\mu(z)$ in \eqref{mu2},  we complete the proof of Theorem \ref{theorem1}.
  \end{proof}

\section{Analogues of the Abel--Plana Summation Formula}
We derive an alternative version of Theorem \ref{theorem1} which shows a clearer connection with the Abel--Plana summation formula.

\begin{theorem}\label{theorem3}  Assume the hypotheses of Theorem \ref{theorem1}.  Then
\begin{align*}
 \sum_{n=1}^{\infty}f(\lambda_n)=&-\df12 f(0)+\int_0^{\infty}\df{p\left(p+\df{1}{\pi}\right)+x^2}{p^2+x^2}f(x)dx\notag\\
  &+i\int_0^{\infty}\{f(it)-f(-it)\}\df{\sigma(-t)+\sigma^{\prime}(-t)/(2\pi)}{e^{2\pi t}-\sigma(-t)}dt.
  \end{align*}
\end{theorem}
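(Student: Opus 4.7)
The plan is to derive Theorem \ref{theorem3} from Theorem \ref{theorem1} by reversing the integration by parts that was performed in equation \eqref{12} of the previous proof. The first two terms on the right-hand side agree, so everything reduces to transforming the third term
$$I := -\df{1}{2\pi}\int_0^{\infty}\{f'(it)+f'(-it)\}\log\left(\df{1}{1-\sigma(-t)e^{-2\pi t}}\right)dt$$
into the form stated in Theorem \ref{theorem3}.

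The key identity I would use is
$$\df{d}{dt}\bigl[f(it)-f(-it)\bigr]=i\bigl\{f'(it)+f'(-it)\bigr\},$$
which rewrites the derivative prefactor as $\{f'(it)+f'(-it)\}\,dt=-i\,d\bigl[f(it)-f(-it)\bigr]$. Substituting into $I$ and integrating by parts, the boundary terms should vanish: near $t=0$, expansion gives $1-\sigma(-t)e^{-2\pi t}\sim 2t(\pi+1/p)$ while $f(it)-f(-it)=2itf'(0)+O(t^2)$, so the product of the logarithm with $[f(it)-f(-it)]$ is $O(t\log t)$; as $t\to\infty$, $\sigma(-t)e^{-2\pi t}=O(e^{-2\pi t})$ so the logarithm decays exponentially, and hypothesis \eqref{infinity} forces the product to vanish. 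This yields
$$I=\df{i}{2\pi}\int_0^{\infty}\{f(it)-f(-it)\}\,\df{d}{dt}\log(1-\sigma(-t)e^{-2\pi t})\,dt.$$

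The remaining step is a direct differentiation, essentially the computation already performed in equations \eqref{10}--\eqref{11} but written in terms of $\sigma(-t)$ rather than $\sigma(t)$. Using $\sigma'(-t)=2p/(p+t)^2$, a short calculation gives
$$\df{d}{dt}\log(1-\sigma(-t)e^{-2\pi t})=\df{2\pi[\sigma(-t)+\sigma'(-t)/(2\pi)]}{e^{2\pi t}-\sigma(-t)},$$
and the factor $2\pi$ cancels against the prefactor $1/(2\pi)$, producing exactly the integrand required by Theorem \ref{theorem3}.

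The only delicate point is the justification of the boundary term at $t=0$, where the logarithm diverges logarithmically and $f(it)-f(-it)$ vanishes to first order; the vanishing of the product $O(t\log t)$ must be confirmed explicitly. The rest is routine differentiation and algebraic simplification, parallel to the computations already carried out in the proof of Theorem \ref{theorem1}.
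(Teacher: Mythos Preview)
Your proposal is correct and follows essentially the same approach as the paper's own proof: integrate by parts, verify that the boundary terms vanish at $t=0$ (by the $O(t\log t)$ argument) and at $t\to\infty$ (by hypothesis \eqref{infinity}), then compute $\frac{d}{dt}\log(1-\sigma(-t)e^{-2\pi t})$ directly. The paper carries out precisely these steps in the same order, so there is nothing substantively different to compare.
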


\begin{proof} Integrating by parts, we find that
\begin{align}\label{I}
I:=&\int_0^{\infty}\{f^{\prime}(it)+f^{\prime}(-it)\}\log\left(\df{1}{1-\sigma(-t)e^{-2\pi t}}\right)dt\notag\\
=&\df{1}{i}\{f(it)-f(-it)\}\log\left(\df{1}{1-\sigma(-t)e^{-2\pi t}}\right)\bigg|_0^{\infty}\notag\\
&-\df{1}{i}\int_0^{\infty}\{f(it)-f(-it)\}
\df{(-2\pi\sigma(-t)-\sigma^{\prime}(-t))e^{-2\pi t}}{1-\sigma(-t)e^{-2\pi t}}dt.
\end{align}
We show that the integrated term vanishes.  Consider first the limit as $t\to0$.  Since
 $f(z)$ is analytic at $z=0$,
\begin{equation}\label{relax}
f(it)-f(-it)=O(t),
\end{equation}
as $t\to 0$.  Also, as $t\to 0$, $\sigma(-t)\to 1$, and so $\log\left(\df{1}{1-\sigma(-t)e^{-2\pi t}}\right)$ has a logarithmic singularity at $t=0$. Hence,
\begin{equation}\label{relax1}
\lim_{t\to 0}\df{1}{i}\{f(it)-f(-it)\}\log\left(\df{1}{1-\sigma(-t)e^{-2\pi t}}\right)=0.
\end{equation}
Secondly, consider the limit as $t\to\infty$.  As $t\to\infty$, $\sigma(-t)\to -1$.
Also, $\log(1+x) \sim x$ as $x$ tends to $0$.  Thus, as $t\to\infty$,
\begin{equation}\label{100}
\log\left(1-\sigma(-t)e^{-2\pi t}\right)\to e^{-2\pi t}.
\end{equation}
Thus, from  \eqref{infinity} and \eqref{100}, we conclude that
 \begin{equation}\label{infinity1}
\lim_{t\to\infty}|f(\pm it)|\log\left(1-\sigma(-t)e^{-2\pi t}\right)=0.
\end{equation}
Thus, the first term on the right-hand side of \eqref{I} is indeed equal to 0. Hence, we have shown that
\begin{gather}
 -\df{1}{2\pi}\int_0^{\infty}\{f^{\prime}(it)+f^{\prime}(-it)\}\log\left(\df{1}{1-\sigma(-t)e^{-2\pi t}}\right)dt\notag\\
 =i\int_0^{\infty}\{f(it)-f(-it)\}\df{\sigma(-t)+\sigma^{\prime}(-t)/(2\pi)}{e^{2\pi t}-\sigma(-t)}dt.\label{finalintegral}
 \end{gather}
 Finally, from \eqref{main} and \eqref{finalintegral}, we conclude that
 \begin{align*}
 \sum_{n=1}^{\infty}f(\lambda_n)=&-\df{1}{2} f(0)+\int_0^{\infty}\df{p\left(p+\df{1}{\pi}\right)+x^2}{p^2+x^2}f(x)dx\notag\\
  &+i\int_0^{\infty}\{f(it)-f(-it)\}\df{\sigma(-t)+\sigma^{\prime}(-t)/(2\pi)}{e^{2\pi t}-\sigma(-t)}dt.
  \end{align*}
\end{proof}

If in Theorem \ref{theorem3} we let $p\to\infty$, then $\sigma(-t)\to 1$ and $\sigma^{\prime}(-t)\to 0$. Also $\lambda_n\to n$, $n\geq1$.  We thus immediately obtain the Abel--Plana summation formula \eqref{abel}.

As mentioned in the introduction, Koshliakov \cite[Chapter 3, Equation (I)]{koshliakov3} derived an equivalent form of his generalization of the Abel--Plana summation formula, which is given in Theorem \ref{theorem1}. This equivalent form is given next.

\begin{theorem}\label{theorem4} Under the same hypotheses of Theorem \ref{theorem1},
\begin{align}\label{koshman1}
\sum_{j=1}^{\infty}\frac{p^2 + \l_{j}^2}{p(p+\frac{1}{\pi})+\l^2_{j}}f(\l_j)&=-\frac{1}{2}\frac{1}{1+\frac{1}{\pi p}}f(0) +\int_{0}^{\infty}f(x)dx \nonumber\\
&+i\int_{0}^{\infty}\{f(i x)-f(-i x)\}\frac{dx}{\s(x)e^{2\pi x}-1}.
\end{align}
\end{theorem}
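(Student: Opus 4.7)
The plan is to repeat the contour-integration argument of Theorem \ref{theorem1} with the streamlined integrand
\begin{equation*}
F(z):=\frac{f(z)}{\sigma(iz)e^{2\pi iz}-1}.
\end{equation*}
Compared with \eqref{F}, the rational prefactor $(p(p+\tfrac{1}{\pi})+z^2)/(p^2+z^2)$ is removed. The crucial point is that this new $F$ is already analytic at $z=\pm ip$: at $z=ip$ one has $\sigma(iz)=\sigma(-p)=0$, so $F(ip)=-f(ip)$; and at $z=-ip$, $\sigma(iz)=\infty$, so $F(-ip)=0$. Therefore I would integrate $F$ over the same contour $\mathcal{C}_r$ of \eqref{contours}, but without the indentation $C_p$; the only singularities of $F$ in the closed region bounded by $\mathcal{C}_r$ are simple poles at $z=0$ (handled by the indentation $C_0$) and at $z=\lambda_1,\dots,\lambda_r$.

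Using \eqref{G} together with $e^{2\pi i\lambda_j}=(p-i\lambda_j)/(p+i\lambda_j)$, I would compute
\begin{equation*}
\mathrm{Res}_{z=\lambda_j}F(z)=\frac{1}{2\pi i}\cdot\frac{p^2+\lambda_j^2}{p(p+\tfrac{1}{\pi})+\lambda_j^2}\,f(\lambda_j),
\end{equation*}
so that $2\pi i$ times the sum of these residues is exactly the left-hand side of \eqref{koshman1}. A parallel calculation starting from $G'(0)=2i(1+\pi p)/p$ gives $\mathrm{Res}_{z=0}F(z)=f(0)p/[2i(1+\pi p)]=f(0)/[2\pi i(1+1/(\pi p))]$, so the $\epsilon\to 0$ limit of $\int_{C_0}F(z)\,dz$ equals $-\tfrac{1}{2}(1+\tfrac{1}{\pi p})^{-1}f(0)$, matching the first term on the right of \eqref{koshman1}.

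The remainder of the argument is a line-by-line transcription of the proof of Theorem \ref{theorem1} with $\mu(z)$ replaced by the simpler $\mu(z)=f(z)$. I would split $F(z)=-f(z)-f(z)/(\sigma(-iz)e^{-2\pi iz}-1)$ as in \eqref{mu1}. Since $f$ is analytic on $\Re(z)\geq 0$, Cauchy's theorem applied to the upper closed contour (which now has no $ip$ puncture) replaces \eqref{2} by $\int_{\Gamma_{r1}}f(z)\,dz=-\int_{\epsilon}^{r}f(x)\,dx-\int_{C_{01}}f(z)\,dz$, which produces $\int_0^\infty f(x)\,dx$ in the limit $\epsilon\to 0$, $r\to\infty$. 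The estimates \eqref{14}--\eqref{16a} transfer verbatim, with $\mu$ replaced by $f$, to kill the top, bottom, and right contributions of the rectangle as $N,r\to\infty$. The remaining integrals on the imaginary axis combine, exactly as in \eqref{4}, into $i\int_0^{\infty}(f(iy)-f(-iy))/(\sigma(y)e^{2\pi y}-1)\,dy$, giving the final term of \eqref{koshman1}. The main obstacle is really just the residue bookkeeping: verifying the clean collapse of the rational factors that produces the stated coefficients $(p^2+\lambda_j^2)/(p(p+\tfrac{1}{\pi})+\lambda_j^2)$ and $(1+1/(\pi p))^{-1}$, and confirming the analyticity of $F$ at $z=\pm ip$ that allows the $C_p$ indentation to be dropped.
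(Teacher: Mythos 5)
Your argument is correct, but it is a genuinely different route from the one in the paper. The paper obtains Theorem \ref{theorem4} by substituting $f(x)\mapsto \frac{p^{2}+x^{2}}{p(p+\frac{1}{\pi})+x^{2}}\,f(x)$ into Theorem \ref{theorem3} and then simplifying the resulting boundary integral, the key point being the algebraic cancellation $\frac{p^{2}-t^{2}}{p(p+\frac{1}{\pi})-t^{2}}\bigl(1+\frac{p}{\pi(p^{2}-t^{2})}\bigr)=1$. You instead rerun the contour integration of Theorem \ref{theorem1} from scratch with the stripped-down integrand $f(z)/(\sigma(iz)e^{2\pi iz}-1)$, so that the weights $\frac{p^{2}+\lambda_{j}^{2}}{p(p+\frac{1}{\pi})+\lambda_{j}^{2}}$ and the coefficient $\frac{1}{1+\frac{1}{\pi p}}$ of $f(0)$ emerge directly from $G'(\lambda_{j})$ and $G'(0)$; I checked these residue computations and they are right. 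The paper's route is shorter once Theorems \ref{theorem1} and \ref{theorem3} are in hand, but it quietly requires the substituted function to satisfy the hypotheses of Theorem \ref{theorem3} even though it acquires poles at $z=\pm i\sqrt{p(p+\frac{1}{\pi})}$ on the line $\Re(z)=0$ (harmless in the end because of the cancellation above, but not literally covered by the stated hypotheses). Your direct argument avoids that issue entirely and is in fact cleaner than the proof of Theorem \ref{theorem1} itself, since your $F$ is analytic at $z=\pm ip$ (as you verify via $\sigma(-p)=0$ and $\sigma(p)=\infty$) and the indentation $C_{p}$ can be dropped; the price is that you must redo, rather than cite, the decomposition $F=-f-f/(\sigma(-iz)e^{-2\pi iz}-1)$, the Cauchy-theorem step on $\Gamma_{r1}$, and the decay estimates on the top, bottom, and right sides, all of which do transfer verbatim with $\mu$ replaced by $f$.
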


\begin{proof}
As we shall show, Theorem \ref{theorem4} is easily derived from  Theorem \ref{theorem3}. To see this, replace $f(x)$   by $\displaystyle\frac{p^{2}+x^2}{p\left(p+\frac{1}{\pi}\right)+x^2}f(x)$ in Theorem \ref{theorem3}. All of the expressions in \eqref{koshman1}, except the last expression on the right-hand side of \eqref{koshman1}, can be straightforwardly obtained. As for the last one, observe that
\begin{align*}
&i\int_{0}^{\infty}\frac{(p^2-t^2)\{f(it)-f(-it)\}}{p
\left(p+\frac{1}{\pi}\right)-t^2}\left\{\frac{\frac{p-t}{p+t}-\frac{1}{2\pi}
\frac{d}{dt}\left(\frac{p-t}{p+t}\right)}{e^{2\pi t}-\frac{p-t}{p+t}}\right\}\, dt  \nonumber\\
&=i\int_{0}^{\infty}\frac{(p^2-t^2)\{f(it)-f(-it)\}}{p\left(p+\frac{1}{\pi}\right)-t^2}
\left\{\frac{1+\frac{p}{\pi(p+t)(p-t)}}{\sigma(t)e^{2\pi t}-1}\right\}\, dt\nonumber\\
&=i\int_{0}^{\infty}\frac{f(it)-f(-it)}{\sigma(t)e^{2\pi t}-1}\, dt.
\end{align*}
This completes our proof of Theorem \ref{theorem4}.
\end{proof}

\section{Three Analogues of the Abel--Plana Summation Formula Due to Ramanujan}

On pages 334 and 335 in the unorganized pages of his second notebook \cite{nb}, Ramanujan offers the Abel--Plana summation formula and four analogues  \cite[pp.~411--414]{V}.  We state three of them  and then show that they can be deduced from Koshliakov's summation formulas.

\begin{entry}\label{entry4} Let $\varphi(z)$ be analytic for $\Re\,z\geq 0$. Assume that
\begin{equation}\label{entry4a}
\lim_{y\to\infty}|\varphi(x\pm iy)|e^{-\frac{\pi y}{2}}=0,
\end{equation}
uniformly for $x$ in any compact interval on $[0,\infty).$  Suppose also that
\begin{equation}\label{entry4b}
\int_0^{\infty}|\varphi(x\pm iy)|e^{-\frac{\pi y}{2}}dy
\end{equation}
exists for all $x\geq 0$ and tends to $0$ as $x$ tends to $\infty$.  Assume that the integral below exists.  Then
\begin{equation}\label{4c}
4\sum_{n=0}^{\infty}(-1)^n\varphi(2n+1)=\int_0^{\infty}
\df{\varphi(ix)+\varphi(-ix)}{\cosh(\pi x/2)}dx.
\end{equation}
\end{entry}

It should be remarked that in his notebooks \cite{nb}, Ramanujan states Entries \ref{entry4}--\ref{entry6} with no attached hypotheses.

\begin{entry}\label{entry5}
Let $\varphi(z)$ be analytic for $\Re\,z\geq 0$. Assume that
\begin{equation}\label{entry5a}
\lim_{y\to\infty}|\varphi(x\pm iy)|e^{-\frac{\pi y}{2}}=0,
\end{equation}
uniformly for $x$ in any compact interval on $[0,\infty).$  Suppose also that
\begin{equation}\label{entry5b}
\int_0^{\infty}|\varphi(x\pm iy)|e^{-\frac{\pi y}{2}}dy
\end{equation}
exists for all $x\geq 0$ and tends to $0$ as $x$ tends to $\infty$.  Assume that the integral below exists.  Then
\begin{equation}\label{5c}
\frac12\varphi(0)+\sum_{n=1}^{\infty}(-1)^n\varphi(n)=\df{i}{2}
\int_0^{\infty}\df{\varphi(ix)-\varphi(-ix)}{\sinh(\pi x)}dx.
\end{equation}
\end{entry}

\begin{entry}\label{entry6}
Let $\varphi(z)$ be analytic for $\Re\,z\geq 0$. Suppose that
\begin{equation}\label{entry6a}
\lim_{y\to\infty}|\varphi(x\pm iy)|e^{-\pi y}=0,
\end{equation}
uniformly for $x$ in any compact interval on $[0,\infty).$  Assume also that
\begin{equation}\label{entry6b}
\int_0^{\infty}|\varphi(x\pm iy)|e^{-\pi y}dy
\end{equation}
exists for all $x\geq 0$ and tends to $0$ as $x$ tends to $\infty$.  Then, provided that the integral below exists,
\begin{equation}\label{6c}
\sum_{n=0}^{\infty}\varphi(2n+1)=\df{1}{2}
\int_0^{\infty}\varphi(x)dx
-\df12 i\int_0^{\infty}\df{\varphi(ix)-\varphi(-ix)}{e^{\pi x}+1}dx.
\end{equation}
\end{entry}

We now show that Entry \ref{entry6} can be derived from Theorem \ref{theorem4}.

\begin{proof}
If we let $p \to 0$ in \eqref{koshman1}, then $\lambda_n \to n-1/2$, and we have
\begin{align}\label{kosh1}
    \sum_{n=1}^{\infty}f\left( n-\frac{1}{2}\right)&=\int_{0}^{\infty}f(x)dx -i\int_{0}^{\infty}\frac{f(i x)-f(-i x)}{e^{2\pi x}+1}dx.
\end{align}
Now, if we let $f(x)=\varphi(2x)$ in \eqref{kosh1}, then we obtain  \eqref{6c}.
\end{proof}

Entries \ref{entry4} and \ref{entry5} can be derived from another analogue of the Abel--Plana summation formula that is due to Koshliakov, namely, Theorem \ref{koshagain} below.  To prove that theorem, we require the following lemma.

\begin{lemma}\label{L1} Let $\varphi(z)$ be analytic in the quarter-plane, $\Re(z)\geq0, \Im(z)\geq0$.  Assume that
\begin{equation}\label{a}
\lim_{y\to\infty}|\varphi(x\pm iy)|e^{-\pi y}=0,
\end{equation}
uniformly in $x$ on every finite interval, and that
\begin{equation}\label{b}
\int_0^{\infty}|\varphi(x\pm iy)|e^{-\pi y}dy
\end{equation}
exists for every $x\geq 0$ and tends to $0$ as $x\to\infty$.  Then
\begin{equation}\label{c}
\int_0^{\infty}e^{\pi ix}\varphi(x)dx=i\int_0^{\infty}e^{-\pi x}\varphi(ix)dx.
\end{equation}
\end{lemma}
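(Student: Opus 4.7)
The plan is to prove this by Cauchy's theorem applied to a rectangular contour in the first quadrant, where $\varphi$ is assumed analytic. Specifically, for $R, S > 0$, I would integrate $e^{\pi i z}\varphi(z)$ around the positively oriented rectangle with vertices $0, R, R+iS, iS$. Since the integrand is analytic on and inside this rectangle, the total integral is zero, giving
\begin{align*}
0 = &\int_0^R e^{\pi i x}\varphi(x)\,dx + i\int_0^S e^{\pi i(R+iy)}\varphi(R+iy)\,dy \\
&- \int_0^R e^{\pi i(x+iS)}\varphi(x+iS)\,dx - i\int_0^S e^{-\pi y}\varphi(iy)\,dy.
\end{align*}

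Next I would send $S\to\infty$ with $R$ fixed. The third integral is bounded in absolute value by $e^{-\pi S}\int_0^R |\varphi(x+iS)|\,dx$, which tends to $0$ by the uniform convergence hypothesis \eqref{a} applied on the compact interval $[0,R]$. The second integral converges to $ie^{\pi iR}\int_0^\infty e^{-\pi y}\varphi(R+iy)\,dy$ by hypothesis \eqref{b}, which guarantees the existence of the limit. This leaves
\begin{equation*}
\int_0^R e^{\pi i x}\varphi(x)\,dx + ie^{\pi iR}\int_0^\infty e^{-\pi y}\varphi(R+iy)\,dy = i\int_0^\infty e^{-\pi y}\varphi(iy)\,dy.
\end{equation*}

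Then I would let $R\to\infty$. The key step, which I expect to be the main obstacle, is showing that the middle term vanishes. But its absolute value is at most $\int_0^\infty e^{-\pi y}|\varphi(R+iy)|\,dy$, and the second clause of \eqref{b} is precisely the statement that this quantity tends to $0$ as $R\to\infty$. Hence the middle term disappears, and the remaining equality is exactly \eqref{c}. The existence of $\int_0^\infty e^{\pi ix}\varphi(x)\,dx$ on the left is inherited from this limiting process together with the existence of the right-hand side (which follows from taking $x=0$ in \eqref{b}). Thus the hypotheses \eqref{a} and \eqref{b} are used exactly once each: the first to kill the top side of the rectangle, the second to handle both the right side at finite $R$ and its vanishing as $R\to\infty$.
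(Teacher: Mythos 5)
Your proposal is correct and follows essentially the same route as the paper: Cauchy's theorem on a rectangle in the first quadrant, with hypothesis \eqref{a} killing the top side and hypothesis \eqref{b} both controlling the right side at finite abscissa and forcing it to vanish in the limit. Your explicit ordering of the limits ($S\to\infty$ first at fixed $R$, then $R\to\infty$) is if anything slightly more careful than the paper's presentation, but it is the same argument.
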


\begin{proof} Let $C_{r,N}$ denote the rectangle with $0\leq x\leq r, 0\leq y\leq N$.  By Cauchy's theorem,
\begin{equation}\label{d}
\int_{C_{r,N}}e^{\pi iz}\varphi(z)dz=0.
\end{equation}
We first examine the integral over the upper side of $C_{r,N}$. To that end,
\begin{equation}\label{e}
\left|\int_0^{r}e^{\pi i(x+iN)}\varphi(x+iN)dx\right|
\leq\int_0^r e^{-\pi N}|\varphi(x+iN)|dx =o(1),
\end{equation}
as $N\to\infty$,  by \eqref{a}.  Second, we examine the integral over the right side of $C_{r,N}$.  Hence, as $r\to\infty$,
\begin{equation}\label{f}
\left|\int_0^{\infty}e^{\pi i(r+iy)}\varphi(r+iy)i\,dy\right|\leq
\int_0^{\infty}e^{-\pi y}|\varphi(r+iy)|dy\,\,\to 0,
\end{equation}
  by our hypothesis \eqref{b}.  Hence, by \eqref{d}--\eqref{f},
  \begin{equation}\label{g}
  \int_0^{\infty}e^{\pi ix}\varphi(x)dx+\int_{\infty}^0e^{\pi i(0+iy)}\varphi(0+iy)i\,dy=0,
  \end{equation}
  or
  \begin{equation*}
  \int_0^{\infty}e^{\pi ix}\varphi(x)dx-i\int_0^{\infty}e^{-\pi y}\varphi(iy)dy =0,
  \end{equation*}
  which is equivalent to \eqref{c}.
\end{proof}

\begin{theorem}\label{koshagain}
Assume the hypotheses of Theorem \ref{theorem1}. Then, for $p>0$,
\begin{align}\label{koshagain1}
   \sum_{n=1}^{\infty}&\frac{p^2 + \l_{n}^2}{p(p+\frac{1}{\pi})+\l^2_{n}}e^{\pi i \l_n}\phi(2\l_n)=-\frac{1}{2}\frac{1}{1+\frac{1}{\pi p}}\phi(0)+\frac{i}{2}\int_{0}^{\infty}\frac{\s(x/2)\phi(i x)-\phi(-ix)}{\s(x/2)e^{\pi x/2}-e^{-\pi x/2}}dx.
\end{align}
\end{theorem}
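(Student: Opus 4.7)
\medskip

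The plan is to specialize Theorem \ref{theorem4} by applying it to $f(z):=e^{\pi i z}\phi(2z)$. With this choice, the summand $f(\lambda_n) = e^{\pi i \lambda_n}\phi(2\lambda_n)$ matches the left-hand side of \eqref{koshagain1}, and $f(0)=\phi(0)$ yields the first term on the right-hand side exactly. Thus the work is entirely in massaging the two integrals appearing in \eqref{koshman1}.

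For the first integral, the substitution $u=2x$ gives $\int_0^\infty f(x)\,dx=\tfrac12\int_0^\infty e^{\pi i u/2}\phi(u)\,du$. For the last integral, $f(ix)=e^{-\pi x}\phi(2ix)$ and $f(-ix)=e^{\pi x}\phi(-2ix)$, so the same substitution $u=2x$ transforms
\[
i\int_0^\infty \{f(ix)-f(-ix)\}\,\frac{dx}{\s(x)e^{2\pi x}-1}
=\df{i}{2}\int_0^\infty \df{e^{-\pi u/2}\phi(iu)-e^{\pi u/2}\phi(-iu)}{\s(u/2)e^{\pi u}-1}\,du.
\]
The key algebraic step is the identity $\s(u/2)e^{\pi u}-1=e^{\pi u/2}[\s(u/2)e^{\pi u/2}-e^{-\pi u/2}]$, which factors the denominator and gives
\[
\df{i}{2}\int_0^\infty \df{e^{-\pi u}\phi(iu)-\phi(-iu)}{\s(u/2)e^{\pi u/2}-e^{-\pi u/2}}\,du.
\]
Comparing with the target integral on the right-hand side of \eqref{koshagain1}, the difference of numerators is $[\s(u/2)-e^{-\pi u}]\phi(iu)$. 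Using the identity $\s(u/2)-e^{-\pi u}=e^{-\pi u/2}[\s(u/2)e^{\pi u/2}-e^{-\pi u/2}]$ (derived by rearranging the factorization above), this discrepancy collapses to $\tfrac{i}{2}\int_0^\infty e^{-\pi u/2}\phi(iu)\,du$.

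Putting the pieces together, Theorem \ref{koshagain} will follow from Theorem \ref{theorem4} provided we can show the auxiliary identity
\[
\int_0^\infty e^{\pi i u/2}\phi(u)\,du = i\int_0^\infty e^{-\pi u/2}\phi(iu)\,du,
\]
which is precisely Lemma \ref{L1} with $\pi$ replaced by $\pi/2$. The proof of Lemma \ref{L1} is a contour shift across the first quadrant using Cauchy's theorem, and the same argument carries over verbatim with $e^{-\pi y/2}$ decay in place of $e^{-\pi y}$.

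The main obstacle is not the algebra but the verification of hypotheses: the statement of Theorem \ref{koshagain} borrows the hypotheses of Theorem \ref{theorem1} without rephrasing them in terms of $\phi$. One must check that $f(z)=e^{\pi i z}\phi(2z)$ inherits the decay and integrability assumptions of Theorem \ref{theorem1} from the natural assumptions on $\phi$ (notably those of the form $|\phi(x\pm iy)|e^{-\pi y/2}\to 0$ needed for the quarter-plane Cauchy step). Once these hypotheses are properly reformulated for $\phi$, the substitution into Theorem \ref{theorem4} together with the quarter-plane identity above completes the proof.
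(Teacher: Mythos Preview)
Your proposal is correct and follows essentially the same route as the paper: specialize Theorem \ref{theorem4} with $f(z)=e^{\pi i z}\phi(2z)$, factor the denominator via $\sigma(u/2)e^{\pi u}-1=e^{\pi u/2}[\sigma(u/2)e^{\pi u/2}-e^{-\pi u/2}]$, and then eliminate the leftover integrals using the quarter-plane rotation identity (Lemma \ref{L1} applied to $\varphi(x)=\phi(2x)$, which after the substitution $u=2x$ is exactly your $\pi\to\pi/2$ version). The only cosmetic difference is that the paper adds and subtracts $\sigma(x/2)e^{\pi x/2}\phi(ix)$ in the numerator to split off the extra integral, whereas you compare directly with the target and use $\sigma(u/2)-e^{-\pi u}=e^{-\pi u/2}[\sigma(u/2)e^{\pi u/2}-e^{-\pi u/2}]$; these are the same algebraic move.
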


If we let $p \to 0$ in \eqref{koshagain1}, we obtain
\begin{align*}
4\sum_{n=0}^{\infty}(-1)^n\phi(2n+1)=\int_{0}^{\infty}\frac{\phi(ix)+\phi(-ix)}{\cosh{(\pi x/2)}}dx,
\end{align*}
which is Entry \ref{entry4}.

If we let $p \to \infty$ in \eqref{koshagain1}, we find that
\begin{align*}
\sum_{n=1}^{\infty}(-1)^n\phi(2n)=-\frac{1}{2}\phi(0)
+\frac{i}{4}\int_{0}^{\infty}\frac{\phi(ix)-\phi(-ix)}{\sinh\left(\frac{\pi x}{2}\right)}dx,
\end{align*}
which is Entry \ref{entry5}, upon letting $\varphi(x)=\phi(2x)$ and simplifying.

We now prove Theorem \ref{koshagain}.

\begin{proof}
Letting $f(z)=e^{\pi i z}\phi(2z)$ in \eqref{koshman1}, we  find that \begin{align}
   \sum_{n=1}^{\infty}&\frac{p^2 + \l_{n}^2}{p(p+\frac{1}{\pi})+\l^2_{n}}e^{\pi i \l_n}\phi(2\l_n)\nonumber\\
   =&-\frac{1}{2}\frac{1}{1+\frac{1}{\pi p}}\phi(0) +\frac{1}{2}\int_{0}^{\infty}e^{\pi i x/2}\phi(x)dx
+i\int_{0}^{\infty}\frac{e^{-\pi x}\phi(2ix)-e^{\pi x}\phi(-2ix)}{\s(x)e^{2\pi x}-1}dx\nonumber\\
=&-\frac{1}{2}\frac{1}{1+\frac{1}{\pi p}}\phi(0) +\frac{1}{2}\int_{0}^{\infty}e^{\pi i x/2}\phi(x)dx
-\frac{i}{2}\int_{0}^{\infty}\frac{e^{\pi x/2}\phi(-ix)-e^{-\pi x/2}\phi(ix)}{e^{\pi x/2}\left(\s(x/2)e^{\pi x/2}-e^{-\pi x/2}\right)}dx\nonumber\\
=&-\frac{1}{2}\frac{1}{1+\frac{1}{\pi p}}\phi(0) +\frac{1}{2}\int_{0}^{\infty}e^{\pi i x/2}\phi(x)dx \nonumber\\
&-\frac{i}{2}\int_{0}^{\infty}\frac{e^{\pi x/2}\phi(-ix)-\s(x/2)e^{\pi x/2}\phi(i x)-e^{-\pi x/2}\phi(ix)+\s(x/2)e^{\pi x/2}\phi(i x)}{e^{\pi x/2}\left(\s(x/2)e^{\pi x/2}-e^{-\pi x/2}\right)}dx\nonumber\\
=&-\frac{1}{2}\frac{1}{1+\frac{1}{\pi p}}\phi(0) +\frac{1}{2}\int_{0}^{\infty}e^{\pi i x/2}\phi(x)dx \nonumber\\
&-\frac{i}{2}\int_{0}^{\infty}\frac{\phi(-ix)-\s(x/2)\phi(i x)}{\s(x/2)e^{\pi x/2}-e^{-\pi x/2}}dx-\frac{i}{2}\int_{0}^{\infty}\frac{\s(x/2)e^{\pi x/2}\phi(i x)-e^{-\pi x/2}\phi(ix)}{e^{\pi x/2}\left(\s(x/2)e^{\pi x/2}-e^{-\pi x/2}\right)}dx\nonumber\\
=&-\frac{1}{2}\frac{1}{1+\frac{1}{\pi p}}\phi(0) +\frac{1}{2}\int_{0}^{\infty}e^{\pi i x/2}\phi(x)dx \nonumber\\
&-\frac{i}{2}\int_{0}^{\infty}\frac{\phi(-ix)-\s(x/2)\phi(i x)}{\s(x/2)e^{\pi x/2}-e^{-\pi x/2}}dx-\frac{i}{2}\int_{0}^{\infty}e^{-\pi x/2}\phi(i x)dx.\nonumber\\
=&-\frac{1}{2}\frac{1}{1+\frac{1}{\pi p}}\phi(0)-\frac{i}{2}\int_{0}^{\infty}\frac{\phi(-ix)-\s(x/2)\phi(i x)}{\s(x/2)e^{\pi x/2}-e^{-\pi x/2}}dx,
\end{align}
by an application of Lemma \ref{L1} with $\varphi(x)$ replaced by $\phi(2x)$. Our proof is therefore complete.
\end{proof}

\section{Two Further Extensions of the Abel--Plana Summation Formula}

We state two further extensions of the Abel--Plana summation formula, one due to Koshliakov, and the other due to Ramanujan.  We do not provide proofs here.

In Chapter 6 of \cite[p.~101, Equation (I)]{koshliakov3}, Koshliakov derived another extension of \eqref{abel}.

\begin{theorem}\label{koshalt}
Let $s=\sigma+it$, and $\a, \b$ be two real numbers such that $-1< \a <0, ~\b>0$. Let $F(s)$ be a holomorphic function of $s$ in $\a<\s<\b+1$, and such that
$F(s)=\mathcal{O}\left(|t|^{-\l} \right),~ \l>1$,
uniformly in $\a<\s<\b+1$ as $|t|\to\infty$. Define
\begin{align}\label{7.4}
f_1(y):=\frac{1}{2\pi i}\int_{\a- i \infty}^{\a+i \infty}F(s)\frac{ds}{y^s}, ~ y>0.
\end{align}
Furthermore, define \begin{align*}
f(x):= \int_{0}^{\infty}e^{-xy}f_1(y)dy.
\end{align*}
Then, for $p \in \mathbb{R}^{+}$,
\begin{align*}
\sum_{n=1}^{\infty}\int_{0}^{\infty}e^{-nx}\left(\frac{2\pi p -x}{2\pi p +x} \right)^nf_1(x)~dx =&-\frac{1}{2}f(0)+\frac{1}{1+\frac{1}{\pi p}}\int_{0}^{\infty}f(x)dx\\
&-2\int_{0}^{\infty}\frac{f(ix)-f(-ix)}{2i}\s_p(2 \pi x)dx,
\end{align*}
where \cite[Chapter 2, p.~44, Equation (33)]{koshliakov3}
\begin{align}\label{3.33}
\s_p(z):=\sum_{j=1}^{\infty}\frac{p^2+\lambda_j^2}{p\left(p+\frac{1}{\pi}\right)+\lambda_j^2}e^{-\l_j z},
\end{align}
and each $\l_j$, $1\leq j<\infty$, is a solution of \eqref{rootsa}.
\end{theorem}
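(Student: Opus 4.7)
My plan is to combine the Mellin--Barnes representation of $f_1$ with Koshliakov's theory of the generalized zeta functions $\eta_p$ and $\zeta_p$. First, substitute
\begin{equation*}
f_1(x) = \frac{1}{2\pi i}\int_{(\alpha)} F(s)\, x^{-s}\, ds
\end{equation*}
into the left-hand side of the theorem. After interchanging the order of summation and integration and performing the change of variable $u = nx$ in the inner integral, comparison with the definition \eqref{s} of $(s, \nu k)_k$ yields
\begin{equation*}
\int_0^\infty e^{-nx}\left(\frac{2\pi p - x}{2\pi p + x}\right)^n x^{-s}\, dx = n^{s-1}\,\Gamma(1-s)\,(1-s,\, 2\pi p n)_n.
\end{equation*}
Summing over $n$ produces $\eta_p(1-s)$ in view of \eqref{eps}, whence
\begin{equation*}
\textup{LHS} = \frac{1}{2\pi i}\int_{(\alpha)} F(s)\,\Gamma(1-s)\,\eta_p(1-s)\, ds, \qquad -1 < \alpha < 0,
\end{equation*}
the series for $\eta_p(1-s)$ converging since $\textup{Re}(1-s) = 1 - \alpha > 1$.

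Next, I would shift the line of integration rightward to $(\alpha')$ with $1 < \alpha' < \beta + 1$. The integrand has simple poles in the intervening strip only at $s = 0$ (from the pole of $\eta_p(s)$ at $s = 1$, whose residue is $\pi p/(\pi p + 1)$ in Koshliakov's theory; see \cite{koshliakov3}, \cite{kzf}) and at $s = 1$ (from the pole of $\Gamma(1-s)$, combined with the special value $\eta_p(0) = -\tfrac12$). Evaluating these residues produces the contributions $\frac{\pi p}{\pi p + 1}F(0) = \frac{1}{1 + 1/(\pi p)}F(0)$ and $-\tfrac12 F(1)$, respectively. Since $F(1) = \int_0^\infty f_1(y)\,dy = f(0)$ and $F(0) = \int_0^\infty f_1(y)\,y^{-1}\,dy = \int_0^\infty f(x)\,dx$ (the latter by interchanging the order of integration in the Laplace transform defining $f$), these recover the first two terms on the right-hand side of the theorem.

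It remains to identify the shifted contour integral with $-2\int_0^\infty \{f(ix) - f(-ix)\}\sigma_p(2\pi x)/(2i)\,dx$. I would invoke Koshliakov's functional equation
\begin{equation*}
\eta_p(1-s) = 2\,(2\pi)^{-s}\,\Gamma(s)\,\cos(\pi s/2)\,\zeta_p(s),
\end{equation*}
which, together with the reflection $\Gamma(s)\Gamma(1-s) = \pi/\sin(\pi s)$, simplifies $\Gamma(1-s)\eta_p(1-s)$ to $\pi(2\pi)^{-s}\zeta_p(s)/\sin(\pi s/2)$. From \eqref{3.33}, termwise Mellin transform gives $\int_0^\infty x^{s-1}\sigma_p(2\pi x)\,dx = (2\pi)^{-s}\Gamma(s)\zeta_p(s)$; and using $\{f(ix) - f(-ix)\}/(2i) = -\int_0^\infty \sin(xy) f_1(y)\,dy$ together with $\int_0^\infty x^{s-1}\sin(xy)\,dx = y^{-s}\Gamma(s)\sin(\pi s/2)$ gives
\begin{equation*}
\int_0^\infty x^{s-1}\,\frac{f(ix) - f(-ix)}{2i}\, dx = -F(1-s)\,\Gamma(s)\,\sin(\pi s/2).
\end{equation*}
The Parseval identity $\int_0^\infty g(x)h(x)\,dx = \frac{1}{2\pi i}\int_{(c)} G(1-s)H(s)\,ds$ for Mellin transforms then converts the shifted contour integral directly into $-2\int_0^\infty \{f(ix) - f(-ix)\}\sigma_p(2\pi x)/(2i)\,dx$ after trigonometric simplification, completing the identification.

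The principal obstacle I anticipate is the justification of the contour shift and the various Fubini interchanges. Stirling's estimate for $\Gamma(s)$ on vertical lines, the hypothesis $F(s) = \mathcal{O}(|t|^{-\lambda})$ with $\lambda > 1$, and suitable polynomial growth bounds on $\eta_p$ and $\zeta_p$ along vertical lines (which can be derived from the functional equation together with a standard Phragm\'en--Lindel\"of convexity argument) should together secure absolute convergence throughout; but the required growth estimates for the Koshliakov zeta functions, as well as the special values $\eta_p(0)$ and $\textup{Res}_{s=1}\eta_p$, must be imported carefully from \cite{koshliakov3} or \cite{kzf}.
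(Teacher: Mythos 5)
The paper itself gives no proof of Theorem \ref{koshalt}: the surrounding section states explicitly that proofs are omitted, and the result is simply attributed to Chapter 6 of \cite{koshliakov3}, with the introduction remarking only that Koshliakov's argument ``involves the theory of his generalized zeta functions.'' Your proposal is precisely a reconstruction of that route, and its formal skeleton checks out: the change of variable $u=nx$ does convert the inner integral into $n^{s-1}\Gamma(1-s)(1-s,2\pi pn)_n$, the sum over $n$ gives $\Gamma(1-s)\eta_p(1-s)$ on $\Re(s)=\a<0$, the residues at $s=0$ and $s=1$ (using $\Res_{s=1}\eta_p(s)=\pi p/(1+\pi p)$ and $\eta_p(0)=-\tfrac12$, both consistent with the limits $p\to\infty$ and $p\to0$) produce exactly $\frac{1}{1+1/(\pi p)}\int_0^\infty f(x)\,dx$ and $-\tfrac12 f(0)$, and the Parseval step together with $\Gamma(s)\Gamma(1-s)=\pi/\sin(\pi s)$ collapses the shifted integral to $-2\int_0^\infty\{f(ix)-f(-ix)\}\sigma_p(2\pi x)/(2i)\,dx$. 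This is a legitimate and, as far as one can tell, the intended proof.

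Two points deserve care beyond the convergence issues you already flag. First, the functional equation you invoke, $\eta_p(1-s)=2(2\pi)^{-s}\Gamma(s)\cos(\pi s/2)\zeta_p(s)$, is \emph{not} the one quoted in this paper as \eqref{2.30}, which reads $\zeta_p(1-s)=2(2\pi)^{-s}\Gamma(s)\cos(\pi s/2)\eta_p(s)$; these are genuinely different identities since $\zeta_p\neq\eta_p$. Both do hold in Koshliakov's theory (the pair $(\zeta_p,\eta_p)$ satisfies the two reciprocal functional equations, see \cite{kzf}), but you must cite the correct one rather than \eqref{2.30}. Second, the shifted abscissa must satisfy $1<\a'<\min(2,\b+1)$, not merely $\a'<\b+1$: for $\b>1$ an unrestricted shift would cross the pole of $\Gamma(1-s)$ at $s=2$ (and, in the Parseval step, the pole of $\Gamma(w)\sin(\pi w/2)$ at $w=-1$), introducing spurious residue terms.
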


On page 334 in his second notebook, Ramanujan offers another beautiful analogue of the  Abel--Plana summation formula \cite[p.~412]{V}.

\begin{theorem} Assume that $\varphi(z)$ is analytic for $\Re\,z\geq0$.  Let $\alpha$ be real with $0<|\alpha|<1$.  Suppose that
\begin{equation*}
    \lim_{y\to\infty}|\varphi(x\pm iy)|e^{-\pi y}=0,
\end{equation*}
uniformly in $x$ on any finite interval in $x\geq0$. Suppose also that
\begin{equation*}
\int_{-\infty}^{\infty}
\df{\varphi(x+iy)}{\cos(\pi(x+iy))+cos(\pi\alpha)}dy
\end{equation*}
exists for each non-negative number $x$ and tends to $0$ as $x\to\infty$.  Assume also that the integral below converges. Then,
\begin{equation*}
 \df{2}{\sin(\pi\alpha)}\sum_{n=0}^{\infty}
 \{\varphi(2n+1-\alpha)-\varphi(2n+1+\alpha)\}=
 \int_0^{\infty}\df{\varphi(ix)+\varphi(-ix)}
 {\cosh(\pi x)+\cos(\pi\alpha)}dx.
 \end{equation*}
 \end{theorem}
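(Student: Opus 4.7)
The plan is to integrate the meromorphic function
$$F(z) := \frac{\varphi(z)}{\cos(\pi z)+\cos(\pi\alpha)}$$
around the rectangle $\mathcal{R}_{R,N}$ with vertices $\pm iN$ and $R\pm iN$ (with $R$ chosen to lie strictly between two consecutive poles of $F$), and then let first $N\to\infty$ and subsequently $R\to\infty$. From the factorization
$$\cos(\pi z)+\cos(\pi\alpha)=2\cos\!\left(\tfrac{\pi(z+\alpha)}{2}\right)\cos\!\left(\tfrac{\pi(z-\alpha)}{2}\right),$$
the poles of $F$ are exactly the real numbers $2n+1\pm\alpha$ with $n\in\mathbb{Z}$; since $|\alpha|<1$ no pole lies on the imaginary axis, and the enclosed poles are those with $n\geq 0$ and $2n+1\pm\alpha<R$. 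Using the identity $\sin(\pi(2n+1\pm\alpha))=\mp\sin(\pi\alpha)$ together with the derivative $-\pi\sin(\pi z)$ of the denominator, the residue of $F$ at $2n+1\pm\alpha$ is $\pm\varphi(2n+1\pm\alpha)/(\pi\sin(\pi\alpha))$, so the residue theorem gives
$$\oint_{\mathcal{R}_{R,N}}F(z)\,dz=-\frac{2i}{\sin(\pi\alpha)}\sum_{2n+1+\alpha<R}\{\varphi(2n+1-\alpha)-\varphi(2n+1+\alpha)\}.$$

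Next I would show that three of the four sides of $\mathcal{R}_{R,N}$ contribute nothing in the limit. On the horizontal sides, $|\cos(\pi(x\pm iN))|^2=\cos^2(\pi x)+\sinh^2(\pi N)\geq\sinh^2(\pi N)$, so in combination with the hypothesis $|\varphi(x\pm iN)|e^{-\pi N}\to 0$ uniformly on the compact interval $[0,R]$, the integrand on the top and bottom edges is $o(1)$ as $N\to\infty$, hence so is the integral over $[0,R]$. Letting $N\to\infty$ for fixed $R$ therefore reduces the identity to
$$i\int_{-\infty}^{\infty}\{F(R+iy)-F(iy)\}\,dy=-\frac{2i}{\sin(\pi\alpha)}\sum_{2n+1+\alpha<R}\{\varphi(2n+1-\alpha)-\varphi(2n+1+\alpha)\},$$
the two vertical integrals being finite by the stated hypothesis on $\int_{-\infty}^{\infty}\varphi(x+iy)/(\cos(\pi(x+iy))+\cos(\pi\alpha))\,dy$. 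Letting $R\to\infty$, the same hypothesis eliminates the integral along $\Re z=R$, leaving
$$\int_{-\infty}^{\infty}\frac{\varphi(iy)}{\cosh(\pi y)+\cos(\pi\alpha)}\,dy=\frac{2}{\sin(\pi\alpha)}\sum_{n=0}^{\infty}\{\varphi(2n+1-\alpha)-\varphi(2n+1+\alpha)\},$$
where I have used $\cos(i\pi y)=\cosh(\pi y)$. Splitting the integral at $y=0$ and substituting $y\mapsto-y$ on the negative half (with $\cosh$ even) rewrites the left-hand side as $\int_0^{\infty}(\varphi(iy)+\varphi(-iy))/(\cosh(\pi y)+\cos(\pi\alpha))\,dy$, which is the claimed identity.

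The only delicate point is the order of limits, which must be $N\to\infty$ first and then $R\to\infty$: the uniform decay of $|\varphi|e^{-\pi|y|}$ on finite $x$-intervals is exactly what is needed to annihilate the horizontal sides for fixed $R$, whereas the decay of the full vertical-line integral as $\Re z\to\infty$ can only be invoked once $N$ has already been sent to infinity. Otherwise this is a clean Abel--Plana-type argument, the key ingredient being the fortuitous kernel $1/(\cos(\pi z)+\cos(\pi\alpha))$ whose poles and residues exactly produce the summand $\varphi(2n+1-\alpha)-\varphi(2n+1+\alpha)$ with the correct normalization $2/\sin(\pi\alpha)$.
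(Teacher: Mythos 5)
The paper itself does not prove this theorem: it is stated in the section ``Two Further Extensions of the Abel--Plana Summation Formula'' with the explicit remark ``We do not provide proofs here,'' and the reader is referred to \cite[p.~412]{V}, where the argument is by contour integration. Your proposal is correct and is exactly that standard argument: the factorization $\cos(\pi z)+\cos(\pi\alpha)=2\cos\bigl(\tfrac{\pi(z+\alpha)}{2}\bigr)\cos\bigl(\tfrac{\pi(z-\alpha)}{2}\bigr)$ locates the poles at $2n+1\pm\alpha$, the residues $\pm\varphi(2n+1\pm\alpha)/(\pi\sin(\pi\alpha))$ are right, the bound $|\cos(\pi(x\pm iN))|^{2}=\cos^{2}(\pi x)+\sinh^{2}(\pi N)$ kills the horizontal sides, and the order of limits ($N\to\infty$ first, then $R\to\infty$) is handled as the hypotheses demand. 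The only point worth tightening is your choice of $R$: ``strictly between two consecutive poles'' still permits $R$ to separate the pair $2n+1-\alpha$ and $2n+1+\alpha$, leaving an incomplete pair inside the contour; taking $R$ through even integers (say) guarantees the enclosed poles occur in complete pairs, which is what your partial sum $\sum_{2n+1+\alpha<R}\{\varphi(2n+1-\alpha)-\varphi(2n+1+\alpha)\}$ tacitly assumes. One may also assume $\alpha>0$ without loss of generality, since both sides of the identity are invariant under $\alpha\mapsto-\alpha$.
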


\section{Examples}
In this section, we give both  new and known special cases of our main theorems.

\begin{theorem}\label{theorem5}
Let $p>0$, \textup{Re}$(w)>1,$ and \textup{Re}$(z)>0.$ Then, under the hypotheses of Theorem \ref{theorem1},
\begin{equation*}
\sum_{n=1}^{\infty}\frac{p^2 + \l_{n}^2}{p(p+\frac{1}{\pi})+\l^2_{n}}\l_n^{w-1}e^{-\l_nz}=\Gamma(w)z^{-w}+
2\int_0^{\infty}\df{y^{w-1}\cos\left(\tf12 \pi w-zy\right)}{\sigma(y)e^{2\pi y}-1}\ dy.
\end{equation*}
\end{theorem}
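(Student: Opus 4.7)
The plan is to specialize Theorem \ref{theorem4} to the function $f(u):=u^{w-1}e^{-uz}$, treated as a function of the complex variable $u$ with $w$ and $z$ as parameters satisfying $\Re(w)>1$ and $\Re(z)>0$. This function is analytic in the right half-plane (with the principal branch of $u^{w-1}$), and the condition $\Re(w)>1$ guarantees that $f(0)=0$, which will kill the ``$\tfrac12 f(0)/(1+1/(\pi p))$'' boundary term on the right-hand side of \eqref{koshman1}.

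First, I would verify that $f$ satisfies the hypotheses of Theorem \ref{theorem1} (and hence Theorem \ref{theorem4}). The bound $|u^{w-1}|\ll (1+|u|)^{\Re(w)-1}$ combined with $|e^{-uz}|=e^{-x\Re(z)+y\Im(z)}$ on $u=x+iy$ gives decay in $x$ at rate $e^{-x\Re(z)}$, and the exponential hypotheses \eqref{infinity}, \eqref{rlimit} hold because the factor $e^{-x\Re(z)}$ dominates once $x$ is large, while $e^{-2\pi|y|}$ absorbs the mild growth coming from $e^{y\Im(z)}$. The sum $\sum_n f(\lambda_n)=\sum_n \lambda_n^{w-1}e^{-\lambda_n z}$ converges absolutely since $\lambda_n\asymp n$.

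Next, I would compute the three ingredients on the right-hand side of \eqref{koshman1}. The first is $-\tfrac12 (1+1/(\pi p))^{-1}f(0)=0$. The second is the classical gamma integral
\[
\int_0^{\infty}x^{w-1}e^{-xz}\,dx=\Gamma(w)\,z^{-w},
\]
valid under the stated hypotheses. For the third, I would use the principal branch, $(\pm i x)^{w-1}=x^{w-1}e^{\pm i\pi(w-1)/2}$ for $x>0$, to write
\[
f(ix)-f(-ix)=x^{w-1}\bigl[e^{i(\pi(w-1)/2-xz)}-e^{-i(\pi(w-1)/2-xz)}\bigr]=2ix^{w-1}\sin\!\bigl(\tfrac{\pi(w-1)}{2}-xz\bigr),
\]
so that $i\{f(ix)-f(-ix)\}=-2x^{w-1}\sin(\tfrac{\pi(w-1)}{2}-xz)=2x^{w-1}\cos(\tfrac{\pi w}{2}-xz)$, using $\sin(\theta-\pi/2)=-\cos\theta$. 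Assembling these three pieces via \eqref{koshman1} yields exactly the asserted identity.

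The only step requiring genuine care is the hypothesis check, particularly verifying \eqref{rlimit} uniformly in the parameter $\Im(z)$: one must ensure that the polynomial factor $|u|^{\Re(w)-1}$ does not spoil integrability against $e^{-2\pi|y|}$ when $r\to\infty$, and this is handled by the $e^{-r\Re(z)}$ term. Everything else is a direct substitution, and the branch choice for $u^{w-1}$ at $u=\pm ix$ is the only subtle computational point.
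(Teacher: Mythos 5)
Your proposal is correct and follows essentially the same route as the paper: both specialize Theorem \ref{theorem4} to $f(t)=t^{w-1}e^{-tz}$, use $f(0)=0$ for $\Re(w)>1$ to kill the boundary term, evaluate the gamma integral, and carry out the same principal-branch computation reducing $i\{f(it)-f(-it)\}$ to $2t^{w-1}\cos\left(\tfrac12\pi w-zt\right)$. The one point the paper makes explicit that you gloss over is that $t^{w-1}$ is \emph{not} analytic at $t=0$ (it has a branch point there), so the hypotheses of Theorem \ref{theorem1} must be relaxed near the origin rather than verified as stated.
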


\begin{proof}
Let $f(t)=t^{w-1}e^{-t z}$. Despite the fact that $f(t)$ is not analytic at $t=0$, we can relax the hypotheses around $z=0$ in Theorems \ref{theorem1}, \ref{theorem3}, and \ref{theorem4} to include the present application.   Hence, we have
\begin{align*}
    \sum_{n=1}^{\infty}\frac{p^2 + \l_{n}^2}{p(p+\frac{1}{\pi})+\l^2_{n}}\l_n^{w-1}e^{-\l_nz}&=\int_{0}^{\infty}t^{w-1}e^{-tz}dt +i\int_{0}^{\infty}\frac{(iy)^{w-1}e^{-i y z}-(-iy)^{w-1}e^{i y z}}{\sigma(y)e^{2\pi y}-1}\ dy\\
    &=\Gamma(w)z^{-w}-2\int_{0}^{\infty}\frac{y^{w-1}\sin\left({\frac{\pi}{2}(w-1)-y z}\right)}{\sigma(y)e^{2\pi y}-1}\ dy\\
    &=\Gamma(w)z^{-w}+2\int_{0}^{\infty}\frac{y^{w-1}\cos\left({\frac{\pi w}{2}-y z}\right)}{\sigma(y)e^{2\pi y}-1}\ dy.
\end{align*}
\end{proof}

Letting $p \to \infty$ in Theorem \ref{theorem5}, we obtain an identity found in Ramanujan's notebooks \cite{nb},
 \cite[p.~411, Entry 2]{V}.

\begin{entry}
Let $w$ and $z$ be complex numbers with $\Re\, w>1$ and $\Re\,z>0.$  Then,
\begin{equation*}
\sum_{n=1}^{\infty}n^{w-1}e^{-nz}=\Gamma(w)z^{-w}+
2\int_0^{\infty}\df{y^{w-1}\cos\left(\tf12 \pi w-zy\right)}{e^{2\pi y}-1}dy.
\end{equation*}
 \end{entry}

 However, if we let $p \to 0$ in Theorem \ref{theorem5}, we obtain the following new identity.
\begin{corollary}
Let \textup{Re}$(w)>1$ and \textup{Re}$(z)>0.$ Then,
\begin{equation*}
e^{z/2}\sum_{n=1}^{\infty}\left(n-\frac{1}{2}\right)^{w-1}e^{-nz}=\Gamma(w)z^{-w}-
2\int_0^{\infty}\df{y^{w-1}\cos\left(\tf12 \pi w-zy\right)}{e^{2\pi y}+1}\ dy.
\end{equation*}
\end{corollary}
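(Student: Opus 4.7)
The plan is to obtain the corollary as the limit $p \to 0^+$ of Theorem \ref{theorem5}. Three elementary observations drive the computation. First, by Lemma \ref{rooty}, $\lambda_n \in (n-\tf12, n)$ is determined by $x\cos(\pi x) + p\sin(\pi x) = 0$; as $p \to 0^+$ this degenerates to $\cos(\pi x) = 0$, forcing $\lambda_n \to n - \tf12$. Second, for $y > 0$ fixed, $\sigma(y) = (p+y)/(p-y) \to -1$, so $\sigma(y)e^{2\pi y} - 1 \to -(e^{2\pi y} + 1)$. Third, the weight factor $(p^2 + \lambda_n^2)/(p(p+1/\pi) + \lambda_n^2) \to 1$ since $\lambda_n$ is bounded away from $0$. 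Accepting these three limits, the left-hand side of Theorem \ref{theorem5} tends to
\[
\sum_{n=1}^\infty (n-\tf12)^{w-1} e^{-(n-1/2)z} = e^{z/2} \sum_{n=1}^\infty (n-\tf12)^{w-1} e^{-nz},
\]
while the integral on the right converts to $-2\int_0^\infty y^{w-1}\cos(\tf12\pi w - zy)/(e^{2\pi y}+1)\,dy$, yielding the stated identity.

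The technical heart of the argument is justifying that the pointwise limits may be pushed past the sum and the integral. For the sum, since $\lambda_n \in (n-\tf12, n)$ and the weight factor lies in $(0,1]$ (because $p(p+1/\pi) \geq p^2$), the general term is dominated by $n^{\Re w - 1} e^{-(n-1/2)\Re z}$, which is summable for $\Re z > 0$; hence dominated convergence applies. For the integral, two regions need attention. Away from $y = 0$, say on $[\delta, \infty)$, one has $|\sigma(y)e^{2\pi y} - 1| \geq c > 0$ uniformly for $p$ small, so the integrand is dominated by $y^{\Re w - 1} e^{-2\pi y}/c$. Near $y = 0$, the expansion $\sigma(y)e^{2\pi y} - 1 = 2y(\pi + 1/p) + O(y^2)$ shows that the denominator is in fact \emph{larger} in modulus as $p \to 0^+$, so the integrand there is bounded by a constant multiple of $y^{\Re w - 2}$, which is integrable since $\Re w > 1$. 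Thus dominated convergence again applies on $(0, \infty)$.

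The main obstacle is really this near-zero analysis of $\sigma(y)e^{2\pi y}-1$, since the value at $y = 0$ is $0$ but the behavior is delicate in $p$. A clean way to handle it is to split the integral at a small $\delta$, estimate the small piece uniformly by the $y^{\Re w - 2}$ bound above, and apply dominated convergence on $[\delta, \infty)$, finally letting $\delta \to 0^+$ in the limiting identity. Once these estimates are in place, the limit of Theorem \ref{theorem5} as $p \to 0^+$ yields exactly the asserted formula after factoring $e^{z/2}$ from the left-hand side.
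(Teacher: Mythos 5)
Your proposal is correct and takes exactly the paper's route: the paper derives this corollary by simply letting $p\to0$ in Theorem \ref{theorem5} with no further comment, and your dominated-convergence justification of passing the limit through the sum and the integral supplies precisely the detail the paper omits. The one loose point is your near-zero analysis — the expansion $\sigma(y)e^{2\pi y}-1=2y(\pi+1/p)+O(y^2)$ is only meaningful for $y\ll p$, a region that shrinks away as $p\to0^+$ — but the conclusion you want follows more robustly from the direct estimate
\[
\left|\sigma(y)e^{2\pi y}-1\right|=\frac{(p+y)e^{2\pi y}-(p-y)}{|p-y|}\;\geq\;\frac{2y}{p+y}\;\geq\;y
\qquad (0<y\leq 1,\ 0<p\leq 1),
\]
which comes from $N(y):=(p+y)e^{2\pi y}-(p-y)$ satisfying $N(0)=0$, $N'(y)\geq 2$, together with $|p-y|\leq p+y$; this gives the $p$-uniform integrable dominating function $Cy^{\Re w-2}$ near the origin that your argument needs.
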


\section{An Analogue of an Identity in Ramanujan's Second Notebook}

On page 269 in his second notebook \cite{nb}, Ramanujan records the following identity \cite[p.~416]{V}.

\begin{entry}\label{ab}  Let $\alpha, \beta > 0$ with $\alpha\beta=4\pi^2$. Let $\zeta(z)$ denote the Riemann zeta function. If $\Re(n) >0$,  then
\begin{align*}
        &\sqrt{\alpha^n}\bigg(\df{\Gamma(n)\zeta(n)}{(2\pi)^n}+
    \cos\left(\frac12 \pi n\right)
    \sum_{k=1}^{\infty}\df{k^{n-1}}{e^{\alpha k}-1}\bigg)\\
    =&\sqrt{\beta^n}\bigg(\cos\left(\frac12 \pi n\right)\df{\Gamma(n)\zeta(n)}{(2\pi)^n}+
    \sum_{k=1}^{\infty}\df{k^{n-1}}{e^{\beta k}-1}\\
    &-\sin\left(\frac12 \pi n\right)\textup{PV}
    \int_0^{\infty}\df{x^{n-1}}{e^{2\pi x}-1}
    \cot\left(\frac12 \beta x\right)dx\bigg).
\end{align*}
\end{entry}

Entry \ref{ab} is a remarkable theorem, because if we let $n$ be a positive even integer, then Entry \ref{ab} reduces to the well-known transformation formula for Eisenstein series on $\textup{SL}_{2}(\mathbb{Z})$. That there exists a transformation formula when the powers $k^{n-1}$, $\Re(n)>2$, in the summands are \emph{arbitrary} is very surprising. This is another example of a truly \emph{remarkable insight} that possibly only Ramanujan could have made.

The following theorem is a generalization of Entry \ref{ab} in the $p$-setting.
\begin{theorem}\label{thm1}
Let $\eta_p(s)$ be defined in \eqref{eps}. For $\a \b =4\pi^2$ with $\a, \b >0$ and \textup{Re}$(n)>2$,
\begin{align}
    &\sqrt{\a^n}\left(\frac{\Gamma(n)\eta_p(n)}{(2\pi )^n}+
  \cos\left(\df{\pi n}{2}\right) \sum_{j=1}^{\infty}
\df{p^2+\lambda_j^2}{p(p+\frac{1}{\pi})+\lambda_j^2}
\df{\lambda_j^{n-1}}{\sigma\left(\frac{\alpha\lambda_j}{2\pi}\right)
e^{\alpha\lambda_j}-1}\right)\notag \\
    &=\sqrt{\b^n}\Bigg( \cos \left(\frac{\pi n}{2}\right)\frac{\Gamma(n)\eta_p(n)}{(2\pi)^n}+\sum_{j=1}^{\infty}\frac{p^2 + \l_{j}^2}{p(p+\frac{1}{\pi})+\l^2_{j}}\frac{\l_j^{n-1}}{\sigma \left(\frac{\b \l_j}{2\pi}\right)e^{\b \l_j}-1 }\nonumber \\
    &-i\sin \left(\frac{\pi n}{2}\right)\textup{P.V.}\int_{0}^{\infty}\frac{z^{n-1}}{\sigma(z)e^{2\pi z}-1} \left(\frac{\sigma \left(-\frac{i\b z}{2\pi}\right)e^{-i\b z}-\sigma \left(\frac{i\b z}{2\pi}\right)e^{i\b z} }{\left(\sigma \left(\frac{i\b z}{2\pi}\right)e^{i\b z}-1 \right)\left(\sigma \left(-\frac{i\b z}{2\pi}\right)e^{-i\b z}-1 \right)} \right)\ dz\Bigg).\label{bigidentity}
\end{align}
\end{theorem}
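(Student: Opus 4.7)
The plan is to apply Koshliakov's summation formula (Theorem \ref{theorem4}) to the test function
\begin{equation*}
f(z) := \frac{z^{n-1}}{\sigma(\alpha z/(2\pi))e^{\alpha z}-1},
\end{equation*}
the natural $p$-deformation of $z^{n-1}/(e^{\alpha z}-1)$ that drives the classical proof of Entry \ref{ab}. With this choice, the weighted left-hand side of \eqref{koshman1} reproduces exactly the first sum on the left of \eqref{bigidentity}. Since $\sigma(w)e^{2\pi w}-1$ vanishes precisely at $w=\pm i\lambda_j$ and $\alpha\beta=4\pi^2$, $f$ has simple poles on the imaginary axis at $z=\pm i\beta\lambda_j/(2\pi)$, so the contour argument behind Theorem \ref{theorem4} must be re-run with small semicircular indentations around these boundary poles. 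The constant term $f(0)$ vanishes for $\Re(n)>2$ by the local expansion $\sigma(\alpha z/(2\pi))e^{\alpha z}-1\sim\alpha z(1+1/(\pi p))$ near $z=0$.

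The real integral is handled by the same geometric-series manoeuvre that underlies the definition of $\eta_p$: expanding $(\sigma(\alpha x/(2\pi))e^{\alpha x}-1)^{-1}=\sum_{k\ge1}(\sigma(-\alpha x/(2\pi))e^{-\alpha x})^k$ via \eqref{sigma} and substituting $u=k\alpha x$ one recognises \eqref{eps}--\eqref{s} to obtain $\int_0^\infty f(x)\,dx=\Gamma(n)\eta_p(n)/\alpha^n$; the same calculation with $\alpha$ replaced by $2\pi$ also yields $\int_0^\infty x^{n-1}/(\sigma(x)e^{2\pi x}-1)\,dx=\Gamma(n)\eta_p(n)/(2\pi)^n$, which will be needed shortly. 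For the imaginary-axis integral of \eqref{koshman1}, I set $A(x):=\sigma(i\alpha x/(2\pi))e^{i\alpha x}$ and $B(x):=\sigma(-i\alpha x/(2\pi))e^{-i\alpha x}$; the reciprocal relation $AB=1$ forces $1/(A-1)+1/(B-1)=-1$, and together with the trigonometric decompositions $i^{n-1}=\sin(\pi n/2)-i\cos(\pi n/2)$ and $(-i)^{n-1}=\sin(\pi n/2)+i\cos(\pi n/2)$ this splits the integrand into a $\sin(\pi n/2)$-multiple of the principal-value integrand appearing on the right of \eqref{bigidentity} and a $\cos(\pi n/2)$-multiple of $x^{n-1}/(\sigma(x)e^{2\pi x}-1)$; the latter integrates to $\cos(\pi n/2)\Gamma(n)\eta_p(n)/(2\pi)^n$.

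The indentations around the imaginary-axis poles $\pm i\beta\lambda_j/(2\pi)$ then contribute half-residues of the integrand. The derivative of $\sigma(\alpha z/(2\pi))e^{\alpha z}$ at these points is computed by exactly the same manipulation as in \eqref{residue} and produces the Koshliakov weight $(p^2+\lambda_j^2)/(p(p+1/\pi)+\lambda_j^2)$, while the factor $1/(\sigma(x)e^{2\pi x}-1)$ evaluated at $x=\beta\lambda_j/(2\pi)$ produces $1/(\sigma(\beta\lambda_j/(2\pi))e^{\beta\lambda_j}-1)$; summing over $j$ delivers the dual sum on the right of \eqref{bigidentity}. Collecting the four contributions, multiplying through by $\sqrt{\alpha^n}$ and using $(\alpha/(2\pi))^n\sqrt{\beta^n}=\sqrt{\alpha^n}$ (a consequence of $\alpha\beta=4\pi^2$) rearranges the identity into \eqref{bigidentity}. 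The main obstacle is the delicate sign and coefficient bookkeeping in the indentation analysis: tracking the orientation of each semicircular detour, pairing the contributions at $+i\beta\lambda_j/(2\pi)$ and $-i\beta\lambda_j/(2\pi)$ correctly, and verifying that the $\sin/\cos$ split of the preceding step combines with the half-residues so that $\sum_\alpha$ enters \eqref{bigidentity} with coefficient $\cos(\pi n/2)$. The $p\to\infty$ limit, in which $\lambda_j\to j$, $\sigma\to 1$, and $\eta_p\to\zeta$, must collapse \eqref{bigidentity} to Entry \ref{ab}; this provides a sanity check at each stage of the calculation.
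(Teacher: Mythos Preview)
Your approach is essentially the paper's: apply Theorem~\ref{theorem4} to a test function of the form $z^{n-1}/(\sigma(cz/(2\pi))e^{cz}-1)$, indent the imaginary-axis contour around the poles of the test function and pick up half-residues, split the imaginary-axis integrand via $i^{\,n-1}=\sin(\pi n/2)-i\cos(\pi n/2)$ together with the identity $1/(A-1)+1/(B-1)=-1$ (which the paper writes out as the long display preceding \eqref{proof1}), evaluate both the real integral and the resulting $\cos(\pi n/2)$--piece through \eqref{21}, and rearrange using $\alpha\beta=4\pi^2$.

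The one substantive discrepancy is your choice $c=\alpha$, whereas the paper takes $c=\beta$ (their $\phi$ in \eqref{b1}). The identity \eqref{bigidentity} is \emph{not} symmetric under $\alpha\leftrightarrow\beta$: the principal-value integral on the right-hand side explicitly carries $\beta$. With your $f$, the imaginary-axis integral of \eqref{koshman1} produces a P.V.\ integral containing $\sigma(\pm i\alpha z/(2\pi))e^{\pm i\alpha z}$ rather than the $\beta$-version in \eqref{bigidentity}; at the same time the left-hand side of \eqref{koshman1} yields $\sum_\alpha$ with coefficient $1$, while the half-residues at $z=\pm i\beta\lambda_j/(2\pi)$ yield $\cos(\pi n/2)$ times $\sum_\beta$ (the $\cos$ arising from $(\pm i)^n$ exactly as in \eqref{2.4g}). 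That is the opposite pairing from \eqref{bigidentity}, where $\sum_\alpha$ carries the $\cos(\pi n/2)$ and $\sum_\beta$ has coefficient $1$. Your computation therefore proves the companion identity obtained from \eqref{bigidentity} by swapping $\alpha\leftrightarrow\beta$, which is correct but not the stated theorem. Replacing your $f$ by $f(z):=z^{n-1}/(\sigma(\beta z/(2\pi))e^{\beta z}-1)$ aligns every step with the paper's and delivers \eqref{bigidentity} as written; in particular your worry about ``$\sum_\alpha$ entering with coefficient $\cos(\pi n/2)$'' is then resolved automatically, since with the $\beta$-test function it is the \emph{half-residues} (at $z=\pm 2\pi i\lambda_j/\beta=\pm i\alpha\lambda_j/(2\pi)$) that produce $\cos(\pi n/2)\sum_\alpha$, exactly as in \eqref{2.4g}--\eqref{bigthm}.
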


\begin{proof}
In the proof of Koshliakov's generalization of the Abel--Plana summation formula, we integrated over an indented rectangle with vertical sides passing through the real points $0$ and $r$, $r>0$, and horizontal sides passing through $\pm iN$, $N>0$.  In the present application we integrate over two (indented) rectangles.  One is in the upper half-plane with the upper side passing through $iN$, but with the lower side on the real axis.  The second is in the lower half-plane with its bottom side passing through $-iN$ and with the upper side on the real axis.
Let
\begin{equation}\label{b1}
\phi(z):=\df{z^{n-1}}{\sigma\left(\frac{\beta z}{2\pi}\right)e^{\beta z}-1},\qquad \Re(n)>2.
\end{equation}
The function $\phi(z)$ has simple poles at $z=\pm \frac{2\pi i \l_j}{\b}$, $1\leq j<\infty$, and a singularity at $z=0$.
We integrate
\begin{equation}
\df{\phi(z)}{\sigma(\mp iz)e^{\mp 2\pi iz}-1}
\end{equation}
over, respectively, the contours in the upper and lower half-planes, respectively.  In the first instance, we integrate in the positive, counter-clockwise direction, and in the second case we integrate in the clockwise direction.  The integrand has simple poles on the real axis at $z=\lambda_j$, $j\geq 1$, poles on the imaginary axis at $z=\pm \frac{2\pi i\lambda_j}{\beta}$, $j\geq 1$, and a singularity at the origin.  For each pole on the real axis, when we integrate in the upper half-plane, we take a semi-circular indentation of radius $\epsilon$, $0<\epsilon<1$, in the upper half-plane; when we integrate in the lower half-plane, the semi-circular indentation about the pole will be in the lower half-plane. For the poles on the imaginary axis, the contours have semi-circular indentations of radius $\epsilon$, $0<\epsilon<1$, in the right half-plane.  Lastly, for the singularity at the origin, when integrating in the upper half-plane, the contour has a quarter-circular indentation of radius $\epsilon$, $0<\epsilon<1$, about $0$ lying in the upper half-plane, and when integrating in the lower half-plane, the contour has a quarter-circular indentation of radius $\epsilon$, $0<\epsilon<1$, around $0$ in the lower half-plane.  Let $\mathbb{C}^{\pm}$ denote, respectively, the aforementioned contours in the upper and lower half-planes.  Since $\phi(z)$ has no other singularities inside the contours $\mathbb{C}^{\pm}$, by Cauchy's Theorem,
\begin{equation}\label{b2}
\int_{\mathbb{C}^{\pm}}\df{\phi(z)}{\sigma(\mp iz)e^{\mp 2\pi iz}-1}=0.
\end{equation}

 We first determine the three terms on the right-hand side of \eqref{koshman1}. Because $\Re(n)>2$, we see that $\phi(0)=0$.
From \cite[p.~21, Equation (31)]{koshliakov3}
\begin{align}\label{21}
    \int_{0}^{\infty}\frac{z^{n-1}}{\sigma(z)e^{2\pi z}-1} \ dz=\frac{\Gamma(n)\eta_p(n)}{(2\pi)^n},
    \end{align}
where $\eta_p(s)$ is defined in \eqref{eps}. Thus, by \eqref{21},
\begin{align}\label{21a}
    \int_{0}^{\infty}\phi(x)\ dx=  \int_{0}^{\infty}\frac{z^{n-1}}{\sigma\left(\frac{\b z}{2\pi } \right)e^{\b z}-1} \ dz=\frac{\Gamma(n)\eta_p(n)}{(\b)^n}.
\end{align}

Consider next the second integral on the right side of \eqref{koshman1}.
Using \eqref{sigma} below, we find that
\begin{align}
    \phi (i z)&-\phi(-i z) \nonumber\\
    =&z^{n-1}\left(\frac{e^{\pi i(n-1)/2}}{\sigma \left(\frac{i\b z}{2\pi}\right)e^{i\b z}-1} -\frac{e^{-\pi i(n-1)/2}}{\sigma \left(-\frac{i\b z}{2\pi}\right)e^{-i\b z}-1} \right)\nonumber\\
    =&\frac{z^{n-1}}{\left(\sigma \left(\frac{i\b z}{2\pi}\right)e^{i\b z}-1 \right)\left(\sigma \left(-\frac{i\b z}{2\pi}\right)e^{-i\b z}-1 \right)}\nonumber \\
    &\times\Bigg\{i \cos\left( \frac{\pi n}{2}\right)\left(\sigma \left(\frac{i\b z}{2\pi}\right)e^{i\b z}-1 \right)\left(\sigma \left(-\frac{i\b z}{2\pi}\right)e^{-i\b z}-1 \right)\nonumber \\
    &\qquad+\sin\left(\frac{\pi n}{2}\right)\left(\sigma \left(-\frac{i\b z}{2\pi}\right)e^{-i\b z}-\sigma \left(\frac{i\b z}{2\pi}\right)e^{i\b z} \right) \Bigg\}\nonumber\\
    =&z^{n-1}\left\{i \cos\left( \frac{\pi n}{2}\right)+\sin \left( \frac{\pi n}{2}\right)\left(\frac{\sigma \left(-\frac{i\b z}{2\pi}\right)e^{-i\b z}-\sigma \left(\frac{i\b z}{2\pi}\right)e^{i\b z} }{\left(\sigma \left(\frac{i\b z}{2\pi}\right)e^{i\b z}-1 \right)\left(\sigma \left(-\frac{i\b z}{2\pi}\right)e^{-i\b z}-1 \right)} \right) \right\}.\nonumber
\end{align}
From examinations of the integrands' singularities below, we observe that ordinary Riemann integrals do not exist.  However, the principal values of the integrals do exist.  Thus,
\begin{align}\label{proof1}
    i~ &\textup{P.V.}\int_{0}^{\infty}\frac{\phi (i z)-\phi(-i z)}{\sigma(z)e^{2\pi z}-1}\ dz =-\cos \left(\frac{\pi n}{2}\right)\int_{0}^{\infty}\frac{z^{n-1}}{\sigma(z)e^{2\pi z}-1} \ dz \nonumber \\
    &+i\sin \left(\frac{\pi n}{2}\right)\textup{P.V.}\int_{0}^{\infty}\frac{z^{n-1}}{\sigma(z)e^{2\pi z}-1} \left(\frac{\sigma \left(-\frac{i\b z}{2\pi}\right)e^{-i\b z}-\sigma \left(\frac{i\b z}{2\pi}\right)e^{i\b z} }{\left(\sigma \left(\frac{i\b z}{2\pi}\right)e^{i\b z}-1 \right)\left(\sigma \left(-\frac{i\b z}{2\pi}\right)e^{-i\b z}-1 \right)} \right)\ dz.
\end{align}
Applying \eqref{21}
in \eqref{proof1}, we obtain
\begin{align}
    i~ &\textup{P.V.}\int_{0}^{\infty}\frac{\phi (i z)-\phi(-i z)}{\sigma(z)e^{2\pi z}-1}\ dz =-\cos \left(\frac{\pi n}{2}\right)\frac{\Gamma(n)\eta_p(n)}{(2\pi)^n} \label{pv} \\
    &+i\sin \left(\frac{\pi n}{2}\right)\textup{P.V.}\int_{0}^{\infty}\frac{z^{n-1}}{\sigma(z)e^{2\pi z}-1} \left(\frac{\sigma \left(-\frac{i\b z}{2\pi}\right)e^{-i\b z}-\sigma \left(\frac{i\b z}{2\pi}\right)e^{i\b z} }{\left(\sigma \left(\frac{i\b z}{2\pi}\right)e^{i\b z}-1 \right)\left(\sigma \left(-\frac{i\b z}{2\pi}\right)e^{-i\b z}-1 \right)} \right)\ dz.\notag
\end{align}

We now calculate the contributions from \eqref{b2}.
Consider
\begin{equation}\label{b3}
\int_{\mathbb{C}^{\pm}}\dfrac{z^{n-1}dz}{(\sigma \left(\frac{\b z}{2\pi}\right)e^{\b z}-1 )(\sigma(\mp iz)e^{\mp 2\pi iz}-1)}, \qquad \Re(n)>2.
\end{equation}
The integrand has simple poles at $z=\lambda_j$, $1\leq j<\infty$.  For each $\lambda_j$,  we first calculate the limit of the integral over its related semi-circular contour in the upper half-plane as $\epsilon \to 0$. Set $z=\lambda_j+\epsilon e^{i\theta}, \pi\geq \theta \geq 0$.  Thus, we determine
\begin{align}\label{b4}
J_1:=&\lim_{\epsilon  \to 0}\int_{\pi}^0\df{(\lambda_j+\epsilon e^{i\theta})^{n-1}\epsilon ie^{i\theta}d\theta}
{\left(\sigma\left(\frac{\beta}{2\pi}(\lambda_j+\epsilon e^{i\theta})\right)
e^{\beta(\lambda_j+\epsilon e^{i\theta})}-1\right)
\left(\sigma(- i(\lambda_j+\epsilon e^{i\theta}))e^{- 2\pi i(\lambda_j+\epsilon e^{i\theta})}-1\right)}\notag\\
=&\df{i\,\lambda_j^{n-1}}{\left(\sigma\left(\frac{\beta\lambda_j}{2\pi}\right)e^{\beta\lambda_j}-1\right)}
\int_{\pi}^0e^{i\theta}d\theta\lim_{\epsilon  \to 0}\df{\epsilon}{\left(\sigma(- i(\lambda_j+\epsilon e^{i\theta}))e^{- 2\pi i(\lambda_j+\epsilon e^{i\theta})}-1\right)}.
\end{align}
To calculate the limit above, we invoke the partial fraction decomposition
  \cite[p.~14, Equation 4]{koshliakov3}
\begin{align}\label{2.4}
\frac{1}{\sigma(z)e^{2\pi z}-1}=-\frac{1}{2}+\frac{1}{2 \pi}\frac{1}{1+\frac{1}{\pi p}}\frac{1}{z}+\frac{z}{\pi}\sum_{k=1}^{\infty}\frac{p^2+\lambda^2_{k}}{p\left(p+\frac{1}{\pi}\right)
+\lambda^2_k}\frac{1}{z^2+\lambda^2_k}.
\end{align}
Thus, from \eqref{2.4},
\begin{gather}
\df{1}{\left(\sigma(- i(\lambda_j+\epsilon e^{i\theta}))e^{- 2\pi i(\lambda_j+\epsilon e^{i\theta})}-1\right)}
=-\df12+\df{1}{2\pi}\,\df{1}{1+\tf{1}{\pi p}}\,
\df{1}{-i(\lambda_j+\epsilon e^{i\theta})}\notag\\
+\df{-i(\lambda_j+\epsilon e^{i\theta})}{\pi}
\sum_{k=1}^{\infty}\frac{p^2+\lambda^2_{k}}{p\left(p+\frac{1}{\pi}\right)
+\lambda^2_k}\,\frac{1}{(-i(\lambda_j+\epsilon e^{i\theta}))^2+\lambda^2_k}\label{2.4cc}.
\end{gather}
When we multiply both sides of \eqref{2.4cc} by $\epsilon$ and take the limit as $\epsilon\to0$, the only limiting expression that is not equal to $0$ arises from the term when $k=j$ in the series.  Thus,
\begin{align}\label{2.4ccc}
&\lim_{\epsilon\to 0}\df{\epsilon}{\left(\sigma(- i(\lambda_j+\epsilon e^{i\theta}))e^{- 2\pi i(\lambda_j+\epsilon e^{i\theta})}-1\right)}\notag\\=&-\lim_{\epsilon\to 0}\df{\epsilon i\lambda_j}{\pi}\,\frac{p^2+\lambda^2_{j}}{p\left(p+\frac{1}{\pi}\right)
+\lambda^2_j}\,\frac{1}{(-i(\lambda_j+\epsilon e^{i\theta}))^2+\lambda^2_j}\notag\\
=&-\lim_{\epsilon\to 0}\df{\epsilon i \lambda_j}{\pi}\,\frac{p^2+\lambda^2_{j}}{p\left(p+\frac{1}{\pi}\right)
+\lambda^2_j}\,\frac{1}{-2\epsilon\lambda_j e^{i\theta}-\epsilon^2 e^{2i\theta}}\notag\\
=&\df{i(p^2+\lambda_j^2)}{2\pi e^{i\theta} \left(p\left(p+\frac{1}{\pi}\right)+\lambda_j^2\right)}.
\end{align}
Hence, by \eqref{b4} and \eqref{2.4ccc},
\begin{align}\label{2.4cccc}
J_1=&
-\df{\lambda_j^{n-1}}{\left(\sigma\left(\frac{\beta\lambda_j}{2\pi}\right)e^{\beta\lambda_j}-1\right)}
\int_{\pi}^0\df{p^2+\lambda_j^2}{2\pi \left(p\left(p+\frac{1}{\pi}\right)+\lambda_j^2\right)}d\theta\notag\\
=&\df{\lambda_j^{n-1}}{2\left(\sigma\left(\frac{\beta\lambda_j}{2\pi}\right)e^{\beta\lambda_j}-1\right)}\,
\df{p^2+\lambda_j^2}{\left(p\left(p+\frac{1}{\pi}\right)+\lambda_j^2\right)}.
\end{align}
Upon letting $r\to\infty$, we find that the total number of contributions from the semi-circular contours in the upper half-plane is equal to
\begin{equation}\label{2.4dd}
\mathbb{J}_1:=\df12\sum_{j=1}^{\infty}
\df{\lambda_j^{n-1}}{\left(\sigma\left(\frac{\beta\lambda_j}{2\pi}\right)e^{\beta\lambda_j}-1\right)}\,
\df{p^2+\lambda_j^2}{\left(p\left(p+\frac{1}{\pi}\right)+\lambda_j^2\right)}.
\end{equation}

 Secondly, for each $\lambda_j$, $1\leq j<\infty$, we calculate the limit as  $\epsilon\to0$ of the integral over the semi-circular contours in the lower half-plane. Set $z=\lambda_j+\epsilon e^{i\theta}, \pi\leq \theta \leq 2\pi$.  We determine
\begin{align}\label{b4b}
J_2:=&\lim_{\epsilon  \to 0}\int_{\pi}^{2\pi}\df{(\lambda_j+\epsilon e^{i\theta})^{n-1}\epsilon ie^{i\theta}d\theta}
{\left(\sigma\left(\frac{\beta}{2\pi}(\lambda_j+\epsilon e^{i\theta})\right)
e^{\beta(\lambda_j+\epsilon e^{i\theta})}-1\right)
\left(\sigma( i(\lambda_j+\epsilon e^{i\theta}))e^{ 2\pi i(\lambda_j+\epsilon e^{i\theta})}-1\right)}\notag\\
=&\lim_{\epsilon  \to 0}\df{\epsilon i\,\lambda_j^{n-1}}{\left(\sigma\left(\frac{\beta\lambda_j}{2\pi}\right)e^{\beta\lambda_j}-1\right)}
\int_{2\pi}^{\pi}\df{e^{i\theta}d\theta}{\left(\sigma( i(\lambda_j+\epsilon e^{i\theta}))e^{ 2\pi i(\lambda_j+\epsilon e^{i\theta})}-1\right)}.
\end{align}
The remaining portions of the calculation are analogous to those for $J_1$, and we find that
\begin{equation*}
J_2=\df{\lambda_j^{n-1}}{2\left(\sigma\left(\frac{\beta\lambda_j}{2\pi}\right)e^{\beta\lambda_j}-1\right)}\,
\df{p^2+\lambda_j^2}{\left(p\left(p+\frac{1}{\pi}\right)+\lambda_j^2\right)}.
\end{equation*}
As above, upon letting $r\to\infty$, we find that the total number of contributions from the semi-circular contours in the lower half-plane is equal to
\begin{equation}\label{2.4dddd}
\mathbb{J}_2:=\df12\sum_{j=1}^{\infty}
\df{\lambda_j^{n-1}}{\left(\sigma\left(\frac{\beta\lambda_j}{2\pi}\right)e^{\beta\lambda_j}-1\right)}\,
\df{p^2+\lambda_j^2}{\left(p\left(p+\frac{1}{\pi}\right)+\lambda_j^2\right)}.
\end{equation}
Therefore, from \eqref{b4b} and \eqref{2.4dd}, the contribution from the poles on the real axis is
\begin{equation}\label{2.4ddd}
\mathbb{J}_1+\mathbb{J}_2=\sum_{j=1}^{\infty}
\df{\lambda_j^{n-1}}{\left(\sigma\left(\frac{\beta\lambda_j}{2\pi}\right)e^{\beta\lambda_j}-1\right)}\,
\df{p^2+\lambda_j^2}{\left(p\left(p+\frac{1}{\pi}\right)+\lambda_j^2\right)}.
\end{equation}

We now calculate the contributions from the poles on the imaginary axis. We first consider the indentation around the pole at $\frac{2\pi i \l_j}{\b}, ~1\leq j<\infty$.
Letting $z=\frac{2\pi i \l_j}{\b}+\epsilon e^{i \theta} $, we define
\begin{align}\label{L10}
    L_1:&=\lim_{\epsilon \to 0}\int_{\pi/2}^{-\pi/2 }\tfrac{\left( \frac{2\pi i \l_j}{\b}+\epsilon e^{i \theta} \right)^{n-1}i \epsilon e^{i \theta}\ d\theta}{\left(\sigma \left(\frac{\b }{2\pi}\left( \frac{2\pi i \l_j}{\b}+\epsilon e^{i \theta} \right)\right)e^{\b \left( \frac{2\pi i \l_j}{\b}+\epsilon e^{i \theta} \right)}-1 \right)\left(\sigma \left(-i \left( \frac{2\pi i \l_j}{\b}+\epsilon e^{i \theta} \right)\right)e^{-2\pi i \left( \frac{2\pi i \l_j}{\b}+\epsilon e^{i \theta} \right)}-1 \right)} \nonumber \\
    &=\int_{\pi/2}^{-\pi/2 }\tfrac{\left( \frac{2\pi i \l_j}{\b}\right)^{n-1}i e^{i \theta}}{\sigma\left( \frac{2\pi \l_j}{\b}\right)e^{\frac{4\pi^2 \l_j}{\b}}-1}\mathbb{L}_1(\theta)\ d\theta,
\end{align}
where
\begin{align}\label{Ltheta}
    \mathbb{L}_1(\theta):&=\lim_{\epsilon \to 0}\frac{\epsilon}{\left(\sigma \left(\frac{\b }{2\pi}\left( \frac{2\pi i \l_j}{\b}+\epsilon e^{i \theta} \right)\right)e^{\b \left( \frac{2\pi i \l_j}{\b}+\epsilon e^{i \theta} \right)}-1 \right)}\nonumber \\
    &=\lim_{\epsilon \to 0}\frac{\epsilon}{\sigma \left( i \l_j+\frac{\b \epsilon e^{i \theta}}{2\pi } \right) e^{2\pi i \l_j+\b \epsilon e^{i \theta}}-1 }.
\end{align}
Appealing again to \eqref{2.4}, we find that
\begin{align}
\label{2.4a}
\mathbb{L}_1(\theta):=&\lim_{\epsilon\to 0}\epsilon\bigg(-\df12+\df{1}{2\pi}\df{1}{\left(1+\frac{1}{p}\right)}\,\df{1}{\left(i\lambda_j+\frac{\beta\epsilon e^{i\theta}}{2\pi}\right)}\notag\\
&+\df{i\lambda_j+\frac{\beta\epsilon e^{i\theta}}{2\pi}}{\pi}
\sum_{k=1}^{\infty}\df{p^2+\lambda_k^2}{p(p+\frac{\pi}{p})+\lambda_k^2}\,
\df{1}{\left(i\lambda_j+\frac{\beta\epsilon e^{i\theta}}{2\pi}\right)^2+\lambda_k^2}\bigg).
\end{align}
As $\epsilon\to0$, the only  term from the partial fraction decomposition \eqref{2.4a} that provides a nonzero contribution in the limit  is when $k=j$. Thus,
\begin{align}\label{2.4b}
\mathbb{L}_1(\theta)=&\lim_{\epsilon\to 0}\epsilon
\df{i\lambda_j+\frac{\beta\epsilon e^{i\theta}}{2\pi}}{\pi}\,
\df{p^2+\lambda_j^2}{p(p+\frac{1}{\pi})+\lambda_j^2}\,
\df{1}{\left(i\lambda_j+\frac{\beta\epsilon e^{i\theta}}{2\pi}\right)^2+\lambda_j^2}\notag\\
=&\lim_{\epsilon\to 0}\epsilon
\df{i\lambda_j+\frac{\beta\epsilon e^{i\theta}}{2\pi}}{\pi}\,
\df{p^2+\lambda_j^2}{p(p+\frac{1}{\pi})+\lambda_j^2}\,
\df{1}{\df{i\lambda_j\beta\epsilon e^{i\theta}}{\pi}+\left(\df{\beta\epsilon e^{i\theta}}{2\pi}\right)^2}\notag\\
=&\df{i\lambda_j}{\pi}\df{p^2+\lambda_j^2}{p(p+\frac{1}{\pi})+\lambda_j^2}\,
\df{1}{\df{i\lambda_j\beta e^{i\theta}}{\pi}}\notag\\
=&\df{p^2+\lambda_j^2}{\beta e^{i\theta}\left(p\left(p+\frac{1}{\pi}\right)+\lambda_j^2\right)}.
\end{align}
Putting \eqref{2.4b} in \eqref{L10} and using the hypothesis $\alpha\beta=4\pi^2$, we conclude that
\begin{align}\label{2.4c}
L_1=&\int_{\pi/2}^{-\pi/2}\df{\left(\frac{2\pi i\lambda_j}{\beta}\right)^{n-1}ie^{i\theta}}
{\sigma\left(\frac{2\pi\lambda_j}{\beta}\right)e^{\frac{4\pi^2\lambda_j}{\beta}}-1}\,\,
\df{p^2+\lambda_j^2}{\beta e^{i\theta}\left(p\left(p+\frac{1}{\pi}\right)+\lambda_j^2\right)}\,d\theta\notag\\
=&-\df{i\pi(p^2+\lambda_j^2)\left(\frac{2\pi i\lambda_j}{\beta}\right)^{n-1}}
{\beta\left(p\left(p+\frac{1}{\pi}\right)+\lambda_j^2\right)
\left(\sigma\left(\frac{2\pi\lambda_j}{\beta}\right)e^{\alpha\lambda_j}-1\right)}.
\end{align}
Now,
\begin{equation*}
-\df{i\pi}{\beta}\left(\frac{2\pi i\lambda_j}{\beta}\right)^{n-1}
=-\df{(2\pi)^n\lambda_j^{n-1}i^n}{2\beta^n}=-\df12\left(\df{4\pi^2}{\beta^2}\right)^{n/2}\lambda_j^{n-1}i^n
=-\df12\left(\df{\alpha}{\beta}\right)^{n/2}\lambda_j^{n-1}i^n.
\end{equation*}
Thus, \eqref{2.4c} takes the shape
\begin{equation}\label{2.4d}
L_1=-\df12\left(\df{\alpha}{\beta}\right)^{n/2}\lambda_j^{n-1}i^n\df{p^2+\lambda_j^2}
{\left(p\left(p+\frac{1}{\pi}\right)+\lambda_j^2\right)
\left(\sigma\left(\frac{2\pi\lambda_j}{\beta}\right)e^{\alpha\lambda_j}-1\right)}.
\end{equation}

 Secondly, consider the indentation around the pole at $\frac{-2\pi i \l_j}{\b}, ~1\leq j<\infty$.
Let $z=-\frac{2\pi i \l_j}{\b}+\epsilon e^{i \theta} $ and define
\begin{align}\label{2.4f}
    L_2:&=\lim_{\epsilon \to 0}\int_{-\pi/2}^{\pi/2 }\tfrac{\left( -\frac{2\pi i \l_j}{\b}+\epsilon e^{i \theta} \right)^{n-1}i \epsilon e^{i \theta}}{\left(\sigma \left(\frac{\b }{2\pi}\left( -\frac{2\pi i \l_j}{\b}+\epsilon e^{i \theta} \right)\right)e^{\b \left(- \frac{2\pi i \l_j}{\b}+\epsilon e^{i \theta} \right)}-1 \right)\left(\sigma \left(i \left( -\frac{2\pi i \l_j}{\b}+\epsilon e^{i \theta} \right)\right)e^{2\pi i \left(- \frac{2\pi i \l_j}{\b}+\epsilon e^{i \theta} \right)}-1 \right)} \nonumber \\
    &=:\int_{-\pi/2}^{\pi/2 }\tfrac{\left( -\frac{2\pi i \l_j}{\b}\right)^{n-1}i e^{i \theta}}{\sigma\left( \frac{2\pi \l_j}{\b}\right)e^{4\pi^2 \l_j/\b}-1}\mathbb{L}_2(\theta)\ d\theta.
\end{align}
To calculate $\mathbb{L}_2(\theta)$, we use \eqref{2.4} and proceed as in \eqref{2.4a} to find that
\begin{align}\label{2.4bc}
\mathbb{L}_2(\theta)=&\lim_{\epsilon\to 0}\epsilon
\df{-i\lambda_j+\frac{\beta\epsilon e^{i\theta}}{2\pi}}{\pi}\,
\df{p^2+\lambda_j^2}{p(p+\frac{1}{\pi})+\lambda_j^2}\,
\df{1}{\left(-i\lambda_j+\frac{\beta\epsilon e^{i\theta}}{2\pi}\right)^2+\lambda_j^2}\notag\\
=&\lim_{\epsilon\to 0}\epsilon
\df{-i\lambda_j+\frac{\beta\epsilon e^{i\theta}}{2\pi}}{\pi}\,
\df{p^2+\lambda_j^2}{p(p+\frac{1}{\pi})+\lambda_j^2}\,
\df{1}{\left(-\frac{i\lambda_j\beta\epsilon e^{i\theta}}{\pi}+\left(\frac{\beta\epsilon e^{i\theta}}{2\pi}\right)^2\right)}\notag\\
=&-\df{i\lambda_j}{\pi}\,\df{p^2+\lambda_j^2}{p(p+\frac{1}{\pi})+\lambda_j^2}\,
\frac{1}{\frac{-i\lambda_j\beta\e^{i\theta}}{\pi}}\notag\\
=&\frac{p^2+\lambda_j^2}{\beta e^{i\theta}\left(p\left(p+\frac{1}{\pi}\right)+\lambda_j^2\right)}.
\end{align}
Using \eqref{2.4bc} in \eqref{2.4f}, and once again using the hypothesis $\alpha\beta=4\pi^2$, we conclude that
\begin{align}\label{2.4e}
L_2=&\pi i\df{\left(-\frac{2\pi i\lambda_j}{\beta}\right)^{n-1}}{\beta}\,
\df{p^2+\lambda_j^2}{p(p+\frac{1}{\pi})+\lambda_j^2}\,
\df{1}{\sigma\left(\frac{2\pi\lambda_j}{\beta}\right)e^{\alpha\lambda_j}-1}\notag\\
=&-\df12\left(\df{\alpha}{\beta}\right)^{n/2}\lambda_j^{n-1}(-i)^n
\df{p^2+\lambda_j^2}{p(p+\frac{1}{\pi})+\lambda_j^2}\,
\df{1}{\sigma\left(\frac{2\pi\lambda_j}{\beta}\right)e^{\alpha\lambda_j}-1}.
\end{align}

Hence, by \eqref{2.4d} and \eqref{2.4e},
\begin{align}\label{2.4g}
L_1+L_2=-&\df12\left(\df{\alpha}{\beta}\right)^{n/2}\lambda_j^{n-1}\left((-i)^n+(i)^n\right)
\df{p^2+\lambda_j^2}{p(p+\frac{1}{\pi})+\lambda_j^2}\,
\df{1}{\sigma\left(\frac{2\pi\lambda_j}{\beta}\right)e^{\alpha\lambda_j}-1}\notag\\
=&-\left(\df{\alpha}{\beta}\right)^{n/2}\lambda_j^{n-1}\cos(\tfrac12\pi n)
\df{p^2+\lambda_j^2}{p(p+\frac{1}{\pi})+\lambda_j^2}\,
\df{1}{\sigma\left(\frac{2\pi\lambda_j}{\beta}\right)e^{\alpha\lambda_j}-1}.
\end{align}
Also, recall that $\phi(0)=0$. Hence, there is no contribution from the singularity at the origin.

Finally, from \eqref{2.4g}, \eqref{pv}, and Theorem \ref{theorem4}, we conclude that

\begin{align}
    &\sum_{j=1}^{\infty}\frac{p^2 + \l_{j}^2}{p(p+\frac{1}{\pi})+\l^2_{j}}\frac{\l_j^{n-1}}{\sigma \left(\frac{\b \l_j}{2\pi}\right)e^{\b \l_j}-1 }\notag\\
   &- \left(\df{\alpha}{\beta}\right)^{n/2}\cos\left(\df{\pi n}{2}\right)\sum_{j=1}^{\infty}
\df{p^2+\lambda_j^2}{p(p+\frac{1}{\pi})+\lambda_j^2}
\df{\lambda_j^{n-1}}{\sigma\left(\frac{2\pi\lambda_j}{\beta}\right)
e^{\alpha\lambda_j}-1}\notag\\
    =& \frac{\Gamma(n)\eta_p(n)}{(\b)^n}-\cos \left(\frac{\pi n}{2}\right)\frac{\Gamma(n)\eta_p(n)}{(2\pi)^n} \nonumber \\
    &+i\sin \left(\frac{\pi n}{2}\right)\textup{P.V.}\int_{0}^{\infty}\frac{z^{n-1}}{\sigma(z)e^{2\pi z}-1} \left(\frac{\sigma \left(-\frac{i\b z}{2\pi}\right)e^{-i\b z}-\sigma \left(\frac{i\b z}{2\pi}\right)e^{i\b z} }{\left(\sigma \left(\frac{i\b z}{2\pi}\right)e^{i\b z}-1 \right)\left(\sigma \left(-\frac{i\b z}{2\pi}\right)e^{-i\b z}-1 \right)} \right) dz,\label{bigthm}
\end{align}
which is readily seen to be equivalent to Theorem \ref{thm1}.
\end{proof}

We now show that Entry \ref{ab} is a special case of Theorem \ref{thm1}.  Let $p\to\infty$.  Then, $\lambda_j\to j$, $\sigma(z)\to 1$, and $\eta_p(s)\to\zeta(s)$ \cite[p.~6]{kzf}, where $\zeta(s)$ denotes the Riemann zeta function. Also, a brief calculation gives
$$\df{e^{-i\beta z}-e^{i\beta z}}{(e^{i\beta z}-1)(e^{-i\beta z}-1)}=-i\cot(\tf12 \beta z).$$
Letting $p\to\infty$ and using the observations above in Theorem \ref{thm1}, we deduce Entry \ref{ab}.

Next, let $p\to 0$. Then, $\lambda_j\to j-\frac12$, $\sigma(z)\to -1$, and $\eta_p(s) \to (2^{1-s}-1)\zeta(s)$ \cite[p.~6, Equation (2.11)]{kzf}. Another brief calculation gives
$$\df{-e^{-i\beta z}+e^{i\beta z}}{(-e^{i\beta z}-1)(-e^{-i\beta z}-1)}=i\tan(\tf12 \beta z).$$
Hence, Equation \eqref{bigthm} takes the shape
\begin{gather*}
-\sum_{j=1}^{\infty}\frac{(j-\tf12)^{n-1}}{e^{\b\left(j-\tf12\right)}+1 }= \frac{\Gamma(n)(2^{1-n}-1)\zeta(n)}{(\b)^n}-\cos \left(\frac{\pi n}{2}\right)\frac{\Gamma(n)(2^{1-n}-1)\zeta(n)}{(2\pi)^n}  \\
    +\sin \left(\frac{\pi n}{2}\right)\textup{P.V.}\int_{0}^{\infty}\frac{z^{n-1}}{e^{2\pi z}+1}\tan(\tf12\beta z) \ dz
   -\left(\df{\alpha}{\beta}\right)^{n/2}\cos\left(\df{\pi n}{2}\right)\sum_{j=1}^{\infty}\df{(j-\tf12)^{n-1}}
{e^{\alpha(j-\tf12)}+1}.
\end{gather*}

Rewriting the aforementioned identity, we record it in the following corollary, which is new.

\begin{corollary} Let Re$(n)>2$, $\alpha,\beta>0$ such that $\alpha\beta=4\pi^2$.  Then
\begin{gather*}
\sum_{j=1}^{\infty}\frac{(j-\tf12)^{n-1}}{e^{\b (j-\tf12)}+1 }+
\frac{\Gamma(n)(2^{1-n}-1)\zeta(n)}{(\b)^n}\\
=\cos\left(\df{\pi n}{2}\right)\left(\left(\df{\alpha}{\beta}\right)^{n/2}
\sum_{j=1}^{\infty}\df{(j-\tf12)^{n-1}}{e^{\alpha(j-\tf12)}+1}
+\frac{\Gamma(n)(2^{1-n}-1)\zeta(n)}{(2\pi)^n}\right)\\
-\sin \left(\frac{\pi n}{2}\right)\textup{P.V.}\int_{0}^{\infty}\frac{z^{n-1}}{e^{2\pi z}+1}\tan(\tf12\beta z) \ dz.
\end{gather*}
\end{corollary}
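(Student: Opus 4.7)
The plan is to derive this corollary as the $p \to 0^+$ specialization of Theorem \ref{thm1}, taking \eqref{bigthm} as the starting point. First I would identify the asymptotic behavior of each ingredient: the roots $\lambda_j$ of \eqref{rootsa} satisfy $\lambda_j \to j - \tfrac12$, the weight function $\sigma(z)=(p+z)/(p-z)$ tends to $-1$ for each fixed nonzero $z$, the factors $(p^2+\lambda_j^2)/(p(p+1/\pi)+\lambda_j^2)$ collapse to $1$, and $\eta_p(n)\to (2^{1-n}-1)\zeta(n)$ by the reduction recorded in \cite{kzf}. Under these substitutions every denominator of the form $\sigma(w)e^{2\pi w}-1$ becomes $-(e^{2\pi w}+1)$, which converts each generalized series on the left of \eqref{bigthm} into a Dirichlet-type sum over half-integers carrying an overall minus sign, and turns $\Gamma(n)\eta_p(n)/\beta^n$ on the right into $\Gamma(n)(2^{1-n}-1)\zeta(n)/\beta^n$ (similarly for the $(2\pi)^n$ term). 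The hypothesis $\alpha\beta=4\pi^2$ is used to rewrite $\sigma(2\pi\lambda_j/\beta)e^{\alpha\lambda_j}-1$ as $-(e^{\alpha(j-\tfrac12)}+1)$ in the $\alpha$-series.

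Next I would simplify the complex integrand in the principal-value integral. Setting $\sigma(\pm i\beta z/(2\pi))=-1$, the numerator $\sigma(-i\beta z/(2\pi))e^{-i\beta z}-\sigma(i\beta z/(2\pi))e^{i\beta z}$ reduces to $e^{i\beta z}-e^{-i\beta z}=2i\sin(\beta z)$, while the denominator $(\sigma(i\beta z/(2\pi))e^{i\beta z}-1)(\sigma(-i\beta z/(2\pi))e^{-i\beta z}-1)$ becomes $(e^{i\beta z}+1)(e^{-i\beta z}+1)=2+2\cos(\beta z)=4\cos^2(\beta z/2)$, so the quotient equals $i\tan(\beta z/2)$. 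Together with the factor $i\sin(\pi n/2)$ outside the integral this produces exactly $-\sin(\pi n/2)\tan(\beta z/2)$ against the kernel $z^{n-1}/(e^{2\pi z}+1)$. Transposing the $\beta$-sum from the left of \eqref{bigthm} to the right and moving the constant $\Gamma(n)(2^{1-n}-1)\zeta(n)/\beta^n$ to the left then produces the displayed identity of the corollary.

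The main obstacle is the justification of taking $p\to 0^+$ inside the principal-value integral. For $p>0$ the zeros of $\sigma(i\beta z/(2\pi))e^{i\beta z}-1$ lie at $z = 2\pi\lambda_k/\beta$, and these converge as $p\to 0^+$ to $(2k-1)\pi/\beta$, which are precisely the poles of $\tan(\beta z/2)$ in the limiting integrand; thus the singularities handled by the P.V.\ prescription in the limit are only asymptotically present in the prelimit integrand. I would handle this through dominated convergence on intervals uniformly bounded away from the set $\{(2k-1)\pi/\beta : k\geq 1\}$, combined with a local analysis near each such point in which the substitution $z = (2k-1)\pi/\beta + u$ exhibits the explicit odd-in-$u$ cancellation of the singular part, uniformly in a small neighborhood of $p = 0$. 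Once this passage to the limit is justified, the rearrangement that produces the corollary is purely algebraic.
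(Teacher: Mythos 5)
Your proposal follows exactly the paper's own route: let $p\to 0$ in \eqref{bigthm}, using $\lambda_j\to j-\tfrac12$, $\sigma\to -1$, $\eta_p(n)\to(2^{1-n}-1)\zeta(n)$, and the simplification of the integrand quotient to $i\tan\left(\tfrac12\beta z\right)$, then rearrange terms. The only difference is that you also sketch a justification for passing to the limit inside the principal-value integral (dominated convergence away from the limiting poles plus a local odd-in-$u$ cancellation), which the paper takes for granted; this is a welcome strengthening but not a different method.
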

We conclude the paper with one more corollary of Theorem \ref{thm1}, which was obtained in \cite[5.1]{kzf}.
\begin{corollary}\label{1cor}
For $m\in\mathbb{N}$ and $\a\b=\pi^2$,
\begin{align}\label{1coreqn}
\a^{m+1}&\left\{\frac{1}{2}\zeta_{p}(-2m-1)+
\sum_{j=1}^{\infty}\frac{p^2+\l_j^2}{p\left(p+\frac{1}{\pi}\right)
+\l_j^2}\cdot\frac{\l_{j}^{2m+1}}{\s\left(\frac{\l_j \a}{\pi} \right)e^{2\a \l_j}-1}\right\}\nonumber\\
&=(-\b)^{m+1}\left\{\frac{1}{2}\zeta_{p}(-2m-1)
+\sum_{j=1}^{\infty}\frac{p^2+\l_j^2}{p\left(p+\frac{1}{\pi}\right)+\l_j^2}
\cdot\frac{\l_{j}^{2m+1}}{\s\left(\frac{\l_j \b}{\pi} \right)e^{2\b \l_j}-1}\right\}.
\end{align}
\end{corollary}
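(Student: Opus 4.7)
The plan is to specialize Theorem~\ref{thm1} to the even integer $n=2m+2$, where the transcendental contribution collapses. At this value $\cos(\pi n/2)=(-1)^{m+1}$ and $\sin(\pi n/2)=0$, so the principal-value integral in \eqref{bigidentity} vanishes identically, leaving a pure two-sided Lambert-type series identity in $\alpha$ and $\beta$ subject to the Theorem~\ref{thm1} constraint $\alpha\beta=4\pi^2$.

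To convert to the constraint $\alpha\beta=\pi^2$ required in \eqref{1coreqn}, I would next apply the simultaneous rescaling $\alpha\mapsto 2\alpha$, $\beta\mapsto 2\beta$. This turns $\sigma(\alpha\lambda_j/(2\pi))e^{\alpha\lambda_j}$ into $\sigma(\alpha\lambda_j/\pi)e^{2\alpha\lambda_j}$, matching the summand shape of \eqref{1coreqn}, and introduces a common factor $2^{m+1}$ on each side that cancels. The resulting identity has the schematic form
\[
\alpha^{m+1}\bigl[A+(-1)^{m+1}S_\alpha\bigr]=\beta^{m+1}\bigl[(-1)^{m+1}A+S_\beta\bigr],
\]
where $A:=\Gamma(2m+2)\eta_p(2m+2)/(2\pi)^{2m+2}$ and $S_\alpha$, $S_\beta$ denote the infinite series on the two sides of \eqref{1coreqn}.

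To replace $A$ by $\tfrac{1}{2}\zeta_p(-2m-1)$, I would invoke the Koshliakov functional equation between $\zeta_p$ and $\eta_p$ recorded in \cite{kzf}; in analogy with the Riemann functional equation $\zeta(1-s)=2(2\pi)^{-s}\cos(\pi s/2)\Gamma(s)\zeta(s)$, it supplies $A=(-1)^{m+1}\tfrac{1}{2}\zeta_p(-2m-1)$. Substituting this into the displayed identity yields
\[
(-1)^{m+1}\alpha^{m+1}\bigl[\tfrac{1}{2}\zeta_p(-2m-1)+S_\alpha\bigr]=\beta^{m+1}\bigl[\tfrac{1}{2}\zeta_p(-2m-1)+S_\beta\bigr],
\]
and dividing by $(-1)^{m+1}$ together with $(-1)^{m+1}\beta^{m+1}=(-\beta)^{m+1}$ produces precisely \eqref{1coreqn}.

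The main challenge is not analytic but bookkeeping: one must track the factor $(-1)^{m+1}$ carefully so that the functional equation for $\zeta_p$ fuses the two asymmetric appearances of $A$ into the symmetric combination $\tfrac{1}{2}\zeta_p(-2m-1)+S_\star$ on each side. A minor secondary issue is that Theorem~\ref{thm1} was stated under $\Re(n)>2$, so the boundary value $n=2$ (corresponding to $m=0$, if $\mathbb{N}$ includes $0$) would require either analytic continuation of \eqref{bigidentity} in $n$ or a direct limiting argument.
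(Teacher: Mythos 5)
Your proposal is correct and follows essentially the same route as the paper: specialize Theorem \ref{thm1} to the even integer $n=2m+2$ (so the sine term and its principal-value integral drop out), use Koshliakov's functional equation $\zeta_{p}(1-s)=2(2\pi)^{-s}\cos(\tfrac{\pi s}{2})\Gamma(s)\eta_{p}(s)$ to replace $\Gamma(n)\eta_p(n)/(2\pi)^n$ by $(-1)^{m+1}\tfrac12\zeta_p(-2m-1)$, and absorb the sign into $(-\beta)^{m+1}$. Your explicit treatment of the rescaling $\alpha\mapsto2\alpha$, $\beta\mapsto2\beta$ converting the constraint $\alpha\beta=4\pi^2$ into $\alpha\beta=\pi^2$ is a detail the paper leaves implicit, and your caveat about $m=0$ is moot since $m\in\mathbb{N}$ here means $m\geq1$, so $n=2m+2\geq4$.
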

\begin{proof}
Recall the definitions of $\zeta_p(s)$ and $\eta_p(s)$ in \eqref{zps} and \eqref{eps}, respectively. Koshliakov \cite[p.~20, Chapter 1, Equation (30)]{koshliakov3} showed that these generalized zeta functions are related to each other by means of the  functional equation,
\begin{align}\label{2.30}
\zeta_{p}(1-s)=\frac{2 \cos\left(\frac{\pi s}{2}\right)\Gamma(s)}{(2\pi)^s}\eta_{p}(s).
\end{align}
Now let $n$ be an even integer greater than $2$, say, $n=2m, m>1,$ in Theorem \ref{thm1}. Then employ \eqref{2.30} to write $\eta_p(2m)$ in terms of $\zeta_p(1-2m)$ and finally replace $m$ by $m+1$ to arrive at \eqref{1coreqn}.
\end{proof}
We note that the special case $p\to0$ of Corollary \ref{1cor} was first obtained by Malurkar \cite{malurkar} and later rediscovered by the first author \cite[Theorem 4.7]{berndtcrelle}.
\vspace{.1in}

\begin{center}
\textbf{Acknowledgements} \end{center}

The authors are very grateful to the University of Illinois Mathematics Department Librarians, Bernadette Braun, Becky Burner, and Tim Cole for securing some of the cited  manuscripts of N.~S.~Koshliakov as well as the reference \cite{saaryan1}. 

The first and the second authors' research is supported by the MHRD SPARC grant SPARC/2018-2019/P567/SL. The first author's research is partially supported by the Simons Foundation, whereas that of the second author is partially supported by SERB MATRICS grant MTR/2018/000251 and CRG grant CRG/2020/002367.


\begin{thebibliography}{0}

\bibitem{bellucci}
S.~Bellucci, A.~A.~Saharian, D.~H.~Simonyan and V.~V.~Vardanyan, \emph{Fermionic currents in topologically nontrivial braneworlds}, Phys.~Rev.~D~\textbf{98} (2018) 085020.

\bibitem{berndtcrelle}
B.~C.~Berndt, \emph{Analytic Eisenstein series, theta-functions, and series relations in the spirit of Ramanujan}, J.~Reine~Angew.~Math.~303(304) (1978), 332--365.

\bibitem{II}
B.~C.~Berndt, \emph{Ramanujan's Notebooks}, Part II, Springer--Verlag, New
York, 1989.

\bibitem{V}
B.~C.~Berndt, \emph{Ramanujan's Notebooks}, Part V, Springer--Verlag, New
York, 1998.

\bibitem{butzer}
P.~L.~Butzer, P.~J.~S.~G.~Ferreira, G.~Schmeisser and R.~L.~Stens, \emph{The Summation Formulae of Euler–Maclaurin, Abel–Plana, Poisson, and their Interconnections
with the Approximate Sampling Formula of Signal Analysis}, Results.~Math.~\textbf{59} (2011), 359--400.

\bibitem{kzf}
A.~Dixit and R.~Gupta, \emph{Koshliakov zeta functions \textup{I}. Modular relations}, Adv.~Math.~\textbf{393} (2021), Paper No. 108093.

\bibitem{henrici}
P.~Henrici, \emph{Applied and Computational Complex Analysis, Volume 1}, Wiley, New York, 1974.

\bibitem{henrici2}
P.~Henrici, \emph{Applied and Computational Complex Analysis, Volume 2}, Wiley, New York, 1977.

\bibitem{koshliakov}
N.~Koschliakoff, \emph{\"{U}ber eine Summenformel}, Math.~Ann.~\textbf{90} (1923), 26--29.

\bibitem{koshliakov7}
N.S.~Koshliakov, \emph{On a general summation formula and its
applications} (Russian. English summary), Comp.~Rend.~(Doklady) Acad.~Sci.~URSS
 \textbf{4} (1934), 187--191.

\bibitem{koshliakovsomesum1}
N.~S.~Koshliakov, \emph{On some summation formulae connected with the theory of numbers. II}(Russian. English summary), C. R. (Dokl.) Acad. Sci. URSS \textbf{3} (1934), 401--404.

\bibitem{koshliakovsomesum2}
N.~S.~Koshliakov, \emph{On some summation formulae connected with the theory of numbers. II}(Russian. English summary), C. R. (Dokl.) Acad. Sci. URSS \textbf{3} (1934), 553--556.

\bibitem{koshliakovmatsb}
N.~S.~Koshliakov, \emph{An extension of Bernoulli's polynomials} (Russian. English summary), Mat.~Sb.~\textbf{42} Issue 4 (1935), 425--434.

\bibitem{koshliakovmellinappl}
N.~S.~Koschliakov, \emph{Application of Mellin's transformation to the deduction of some summation formulae} (Russian. English summary), Bull.~Acad.~Sci.~URSS, Ser. math.~5 (1941), 43--56.

\bibitem{koshliakov3}
N.~S.~Koshliakov (under the name N.S.~Sergeev), \emph{Issledovanie odnogo klassa transtsendentnykh
funktsii, opredelyaemykh obobshcennym yravneniem Rimana} (A study of a
class of transcendental functions defined by the generalized
Riemann equation) (in Russian), Trudy Mat. Inst. Steklov, Moscow,
1949. (Available online at \url{https://dds.crl.edu/crldelivery/14052}).

\bibitem{malurkar}
S.~L.~Malurkar, \emph{On the application of Herr Mellin's integrals to some series}, J.~Indian~Math.~Soc.~\textbf{16} (1925-26), 130--138.

\bibitem{nist} F. W. J. Olver, D. W. Lozier, R. F. Boisvert, and C. W. Clark, eds., \textit{NIST Handbook of Mathematical
Functions}, Cambridge University Press, Cambridge, 2010.

\bibitem{nb}
S.~Ramanujan, \emph{Notebooks of Srinivasa Ramanujan} (2 volumes), Tata Institute of
Fundamental Research, Bombay, 1957; second ed., 2012.

\bibitem{saaryan1}
A.~A.~Saaryan, \emph{On a formula of Abel-Plana summation} (Russian) Izv. Akad. Nauk Armyan. SSR Ser. Fiz.~\textbf{21} no. 5 (1986), 262--265.

\bibitem{saaryan2}
A.~A.~Saaryan, \emph{A generalized Abel-Plana formula. Applications to cylindrical functions}, Izv.~Akad.~Nauk~Armyan.~SSR Ser.~Mat.~\textbf{22} (1987), no. 2, 166--179, 209; English translation in Sov. J. Contemp. Math. Anal.~\textbf{22}, (1987), 70--86.
\end{thebibliography}
\end{document}